\theoremstyle{plain}
\newtheorem{theorem}{Theorem}
\newtheorem{lemma}{Lemma}
\newtheorem{conjecture}{Conjecture}
\numberwithin{equation}{section}
\begin{document}

\title{Integrals derived from the doubling method}
\author{David Ginzburg}

\address{School of Mathematical Sciences, Sackler Faculty of Exact Sciences, Tel-Aviv University, Israel
	69978} \email{ginzburg@post.tau.ac.il}

\thanks{This research was supported by the ISRAEL SCIENCE FOUNDATION
	(grant No. 461/18).}

\author{David Soudry}
\address{School of Mathematical Sciences, Sackler Faculty of Exact Sciences, Tel-Aviv University, Israel
	69978} \email{soudry@post.tau.ac.il}

\subjclass{Primary 11F70 ; Secondary 22E55}



\keywords{$L$-functions, Doubling method, Eisenstein series, Cuspidal
	automorphic representations, Fourier coefficients}

\begin{abstract}
In this note, we use a basic identity, derived from the generalized doubling integrals of \cite{C-F-G-K1}, in order to explain the existence of various global Rankin-Selberg integrals for certain $L$-functions. To derive these global integrals,  we use the identities relating Eisenstein series in \cite{G-S}, together with the process of exchanging roots. We concentrate on several well known examples, and explain how to obtain them from the basic identity. Using these ideas, we also show how to derive a new global integral.

\end{abstract}

\maketitle

\section{Introduction}
The first construction of global Rankin-Selbrg integrals using the doubling method is due to Piatetski-Shapiro and Rallis. See \cite{PS-R1} where the authors introduced a global integral which represents the standard $L$-function attached to any irreducible, automorphic,  cuspidal representation $\pi$ of $G(\bf A)$, where $G$ is a classical group defined over a number field $F$, and $\bf A$ is its ring of Adeles. We recall this briefly for the symplectic group $G=Sp_{2k}$. Let $\pi$ denote an irreducible, automorphic, cuspidal representation of $Sp_{2k}({\bf A})$. Form the Eisenstein series $E(f_s)$ on $Sp_{4k}({\bf A})$, associated to a smooth, holomorphic section $f_s$ of the parabolic induction $Ind_{Q_{2k}({\bf A})}^{Sp_{4k}({\bf A})}|\det\cdot|^s$, where $Q_{2k}\subset Sp_{4k}$ is the Siegel parabolic subgroup. The doubling integral introduced in \cite{PS-R1} is given by
\begin{equation}\label{dou1}
\int\limits_{Sp_{2k}(F)\times Sp_{2k}(F)\backslash Sp_{2k}({\bf A})\times Sp_{2k}({\bf A})}\varphi_1(g)\overline{\varphi_2(h)}E(f_s)(t(g,h))dgdh,
\end{equation}
where, $\varphi_i$, $i=1,2$, are cusp forms in the space of $\pi$, and $t(g,h)$ denotes the direct sum embedding of $Sp_{2k}({\bf A})\times Sp_{2k}({\bf A})$ inside $Sp_{4k}({\bf A})$. This integral represents $L(\pi,s+\frac{1}{2})$, after normalizing the Eisenstein series. A similar construction exists for general linear groups and for orthogonal groups. 

In recent years new constructions of global integrals which use the doubling method were discovered. First, in \cite{G-H}, the authors constructed a doubling integral for the exceptional group $G_2$. This integral, which involves an Eisenstein series on the exceptional group $E_8$, represents the seven degree $L$-function. Recently, in \cite{C-F-G-K1}, the method of \cite{PS-R1} was extended to a more general doubling integral, which represents the standard $L$-function $L(\pi\times\tau,s+\frac{1}{2})$, where $\pi$, $\tau$ are irreducible, automorphic, cuspidal representations of $G(\bf A)$, $GL_n({\bf A})$ respectively, and $G$ is a split classical group. As explained in \cite{C-F-G-K2}, this construction can be extended to represent $L$-functions $L(\tilde{\pi}\times\tilde{\tau},s+\frac{1}{2})$ for pairs of irreducible, automorphic, cuspidal representations of metaplectic covers of $G(\bf A)$ and $GL_n({\bf A})$. All the above integrals have the same structure in the sense that they have the form
\begin{equation}\label{dou2}
\int\limits_{G(F)\times G(F)\backslash G({\bf A})\times G({\bf A})}\ 
\int\limits_{U(F)\backslash U({\bf A})}
\varphi_1(g)\overline{\varphi_2(h)}E(f_{\mathcal{E}_\tau,s})(ut(g,h))\psi^{-1}_U(u)dudgdh.
\end{equation}
Here, $\varphi_i$, $i=1,2$, are cusp forms in the space of $\pi$; $E(f_{\mathcal{E}_\tau,s})$ is a certain Eisenstein series on a certain group $H$, attached to a smooth, holomorphic section $f_{\mathcal{E}_\tau,s}$ of a parabolic induction on $H(\bf A)$, with parabolic data $(P,\mathcal{E}_\tau)$, where $\mathcal{E}_\tau$ is a certain automorphic representation of the Levi part of $P(\bf A)$, associated to $\tau$; $U$ is a certain unipotent subgroup of $H$ and $\psi_U$ is a character of $U(\bf A)$, trivial on $U(F)$. Finally, $t$ denotes an embedding of $G({\bf A})\times G({\bf A})$ inside $H(\bf A)$, so that conjugation by $t(g,h)$ preserves $U(\bf A)$ and $\psi_U$.  For $G=Sp_{2k}$, integral \eqref{dou1} is obtained from integral \eqref{dou2} by taking $H=Sp_{4k},\ P=Q_{2k},\ U$- the trivial group, $\tau$- the trivial representation of $GL_1({\bf A})$, and $\mathcal{E}_\tau$- the trivial representation of $GL_{2k}({\bf A})$. 

In all known cases of the doubling method \cite{PS-R1}, \cite{G-H}, \cite{C-F-G-K1}, starting with integral \eqref{dou2}, for $\text{Re}(s)$ large, after an unfolding process it is equal to
\begin{equation}\label{dou4}
\int\limits_{ G({\bf A})}
\int\limits_{ U_0({\bf A})}
<\varphi_1,\pi(h)\varphi_2>f_{W({\mathcal E}_\tau),s}(\delta u_0t(1,h))\psi^{-1}_U(u_0)du_0dg.
\end{equation}
Here, $<\varphi_1,\pi(h)\varphi_2>$ denotes the standard $L^2$- inner product of the two cusp forms $\varphi_1,\pi(h)\varphi_2$; $U_0$ is a certain unipotent subgroup of $U$; $\delta$ is a certain element in $H(F)$; $f_{W({\mathcal E}_\tau),s}$ is the composition of the section with a functional $W({\mathcal E}_\tau)$ on $\mathcal{E}_\tau$ given by a certain Fourier coefficient. It defines a unique (up to scalars) functional, satisfying appropriate equivariance properties, on the space of ${\mathcal E}_\tau$. For example, in the cases considered in \cite{PS-R1} and \cite{G-H}, ${\mathcal E}_\tau$ is the trivial representation. In the case of \cite{C-F-G-K1}, ${\mathcal E}_\tau$ is a Speh representation $\Delta(\tau,m)$ (of appropriate length $m$), associated with $\tau$. For example, for $G=Sp_{2k}$, $m=2k$, and $P=Q_{2nk}$ is the Siegel parabolic subgroup of $H=Sp_{4nk}$. It follows that integral \eqref{dou2} is Eulerian, and then it represents the standard $L$-function for the pair $(\pi,\tau)$, after normalizing the Eisenstein series.

The main advantage of the doubling construction is that it applies to any irreducible, automorphic, cuspidal representation of $G({\bf A})$. In other words, it is what we refer to as model free, meaning that for integral \eqref{dou2} to be nonzero (as data vary), one need not assume that $\pi$ has a certain nonzero Fourier coefficient, or any other type of model. 

In  the last 30 years, people found many Rankin-Selberg integrals which represent the above $L$- functions, given by the (generalized) doubling method. Most of them depend on a certain model. For example, for $G=Sp_{2k}$, a global integral for the tensor product $L$-function $L(\pi\times\tau,s)$ was introduced in \cite{G-R-S1}. This construction assumes that an appropriate Whittaker coefficient of the cuspidal representation $\pi$ is not zero. Another construction, this time using a Fourier-Jacobi model, also referred to as a Fourier-Jacobi mixed model, was introduced in \cite{G-J-R-S}. These two constructions use a model which is proved to be unique. This property guarantees that the integrals are indeed Eulerian. There are also examples of integrals obtained by the so-called New Way method. A basic example of such  integrals is introduced in \cite{PS-R2}. They represent the standard $L$-function attached to the representation $\pi$. Here the integrals unfold to a certain model which is not unique. In spite of that, the global integrals are Eulerian. Many other examples of this type of integrals were constructed. See for example \cite{B-F-G}.

In this context, it is only natural to ask  the following questions. First, why do so many different examples of global integrals, which represent the same $L$-function, exist? Second, how does one find these global integrals which turn out to be Eulerian? This issue is especially important when we deal with New Way global integrals. The existence of such integrals is still a mystery. 

In this note we suggest an answer to these questions. Our assertion is that all the integrals which represent the above $L$-functions (represented by the generalized doubling method) can be derived by a relatively  simple procedure from one basic identity. This includes the above constructions of global integrals which use  a unique model of any type, as well as the New Way integrals. In fact, our derivation explains why these types of integrals show up. Moreover, we predict that all integrals which represent these $L$-functions can be derived in the way we shall now explain. We do this for $(G,H,\pi, \tau)$ in the set up of \cite{C-F-G-K1}, where $G$ is a split classical group.

Starting with integral \eqref{dou2}, we define
\begin{equation}\label{dou3}
\xi(\varphi_\pi,f_{\mathcal{E}_\tau,s})(g)=
\int\limits_{G(F)\backslash G({\bf A})}
\int\limits_{U(F)\backslash U({\bf A})}
\varphi_\pi(h)E(f_{\mathcal{E}_\tau,s})(ut(g,h),s)\psi^{-1}_U(u)dudh
\end{equation}
Here, $\varphi_\pi$ is a cusp form in the space of $\pi$. In \eqref{dou2}, we took the complex conjugate of $\pi$ instead of $\pi$, so that $\varphi_\pi$ is in place of $\bar{\varphi}_2$.
We have
\begin{theorem}\label{th1}
Let $f_{\mathcal{E}_\tau,s}$ be a $K$-finite, holomorphic section. Then the function $\xi(\varphi_\pi,f_{\mathcal{E}_\tau,s})$ is meromorphic and takes values in a certain outer conjugate of $\pi$ by an element of order 2, $\pi^\iota$. Moreover, the right hand side of equation \eqref{dou3} is Eulerian, and represents the same partial $L$-function that integral \eqref{dou2} represents, after normalizing the Eisenstein series.
\end{theorem}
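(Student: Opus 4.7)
The plan is to derive the three assertions---meromorphy, $\pi^\iota$-valuedness, and the Eulerian expansion---by running the unfolding of \eqref{dou2} to \eqref{dou4} but stopping short of integrating over $g$. First I would establish absolute convergence for $\operatorname{Re}(s)$ large: rapid decrease of the cusp form $\varphi_\pi$ on $G(F)\backslash G({\bf A})$, together with moderate growth of the Eisenstein series $E(f_{\mathcal E_\tau,s})$, forces the double integral defining $\xi(\varphi_\pi, f_{\mathcal E_\tau,s})(g)$ to converge absolutely, uniformly for $g$ on compacta. Meromorphic continuation in $s$ then follows from the meromorphic continuation of $E$, since for each $g$ the map $\xi(g,\cdot)$ is a continuous linear functional of the automorphic form $E(f_{\mathcal E_\tau,s})$.

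To identify the space in which $\xi(\cdot,s)$ takes values, observe that the left $H(F)$-invariance of $E$, combined with the fact that conjugation by $t(\gamma,1)$ preserves $U({\bf A})$ and the character $\psi_U$ for every $\gamma \in G(F)$, yields after a change of variables in $u$ the automorphy identity $\xi(\gamma g, s) = \xi(g, s)$. The resulting automorphic function on $G$ pairs, via $\int_{G(F)\backslash G({\bf A})}\xi(g,s)\overline{\varphi_2(g)}\,dg$, with any cusp form $\varphi_2$ in the space of $\pi$: by Fubini, this pairing reproduces integral \eqref{dou2} with $\varphi_1=\varphi_\pi$, which is known not to vanish identically. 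Hence $\xi(\cdot,s)$ realizes the contragredient of $\pi$, and for split classical $G$ this contragredient is given concretely as an outer conjugate $\pi^\iota$ by an involution $\iota$ (namely $g\mapsto {}^tg^{-1}$ for $GL_n$, trivial for $Sp_{2k}$, and analogously for the other classical groups). Cuspidality and moderate growth of $\xi$ are inherited from those of $\varphi_\pi$ and $E$.

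The final step is to carry out the unfolding of \cite{C-F-G-K1} on the $(h,u)$-variables only, keeping $g$ as an external parameter. Expanding $E(f_{\mathcal E_\tau,s})$ as $\sum_{\gamma \in P(F)\backslash H(F)} f_{\mathcal E_\tau,s}(\gamma\cdot)$ and interchanging sum and integral, I would run the orbit analysis for the action of $G(F)\ltimes U(F)$ on $P(F)\backslash H(F)$ via $(h,u)\mapsto u\, t(1,h)$; the orbit decomposition does not depend on $g$, so it is precisely the one used in \cite{C-F-G-K1}, and the cuspidality of $\varphi_\pi$ kills every orbit except the open one represented by $\delta$. The stabilizer of $\delta$ inside $G(F)\ltimes U(F)$, obtained by intersecting the full doubling stabilizer $G^{\mathrm{diag}}(F)\ltimes U_0(F)\subset (G(F)\times G(F))\ltimes U(F)$ with the subgroup $\{1\}\times G(F)\ltimes U(F)$, reduces to $U_0(F)$. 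Collapsing the sum therefore leaves
\[
\xi(\varphi_\pi, f_{\mathcal E_\tau,s})(g) = \int_{G({\bf A})}\int_{U_0({\bf A})}\varphi_\pi(h)\, f_{W(\mathcal E_\tau),s}\bigl(\delta\,u_0\,t(g,h)\bigr)\,\psi_U^{-1}(u_0)\,du_0\,dh,
\]
with $g$ kept as a free parameter rather than absorbed into a global inner product as in \eqref{dou4}. Uniqueness (up to scalar) of $W(\mathcal E_\tau)$ on $\mathcal E_\tau$ then forces the integrand to factor into local integrals, and the unramified local computation of \cite{C-F-G-K1} identifies the resulting product with the same partial $L$-function represented by \eqref{dou2}, up to the Eisenstein normalizing factor.

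The principal obstacle I anticipate is this last step: one must confirm that the orbit--stabilizer analysis of \cite{C-F-G-K1} continues to work when only $(h,u)$ are integrated and $g$ is left as a parameter. In particular, one has to verify that the restriction of the open-orbit stabilizer down to $\{1\}\times G(F)\ltimes U(F)$ collapses the $h$-integration to $G({\bf A})$ with no residual quotient, and that no orbits which were non-open under the full $(G\times G)\ltimes U$ action become open (and hence non-negligible) under the restricted one. Once these two points are checked, the remainder of the argument is a direct translation of the analysis in \cite{C-F-G-K1}.
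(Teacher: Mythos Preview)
Your outline follows the paper's strategy (unfold in the $h$-variable, identify the target space, extract an Euler product), but you are missing the single technical ingredient that makes both the $\pi^\iota$-valuedness and the Eulerian property work: the diagonal invariance
\[
\int_{U_0({\bf A})} f_{W(\mathcal E_\tau),s}\bigl(\delta\,u_0\,t(bg,b^\iota h)\bigr)\psi_U^{-1}(u_0)\,du_0
=\int_{U_0({\bf A})} f_{W(\mathcal E_\tau),s}\bigl(\delta\,u_0\,t(g,h)\bigr)\psi_U^{-1}(u_0)\,du_0,
\]
for all $b\in G({\bf A})$, where $b^\iota=\begin{pmatrix}&I_k\\I_k&\end{pmatrix}b\begin{pmatrix}&I_k\\I_k&\end{pmatrix}$. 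Once you have your unfolded expression $\xi(g)=\int_{G({\bf A})}\int_{U_0({\bf A})}\varphi_\pi(h)\,f_{W(\mathcal E_\tau),s}(\delta u_0 t(g,h))\psi_U^{-1}(u_0)\,du_0\,dh$, apply this with $b=g^{-1}$ and change $h\mapsto g^\iota h$ to obtain $\xi(g)=\int_{G({\bf A})}\bigl(\int_{U_0({\bf A})}f_{W(\mathcal E_\tau),s}(\delta u_0 t(1,h))\psi_U^{-1}(u_0)\,du_0\bigr)\,\varphi_\pi(g^\iota h)\,dh$. This exhibits $\xi$ as an integral of right $G({\bf A})$-translates of the single cusp form $g\mapsto\varphi_\pi(g^\iota)$, hence $\xi\in\pi^\iota$ directly (cuspidality included). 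And now, writing $\varphi_\pi(g^\iota h)=(\iota\circ\ell)\bigl(\otimes_v\pi_v(h_v)\varphi_{\pi_v}\bigr)(g)$ for decomposable data, the $dh$-integral factors as $\otimes_v\int_{G(F_v)}(\cdots)\pi_v(h_v)\varphi_{\pi_v}\,dh_v$; at unramified $v$ one averages over $K_{G,v}$ to produce the spherical function $\omega_{\pi_v}$ and recover exactly the local doubling integral of \cite{C-F-G-K1}.

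Without this step your two arguments both break. Your pairing argument shows only that $\langle\xi,\varphi'\rangle\neq 0$ for some $\varphi'\in\pi$, not that $\xi$ lies entirely in an irreducible space; the assertion that cuspidality is ``inherited'' is unjustified, since the $g$-variable sits in the Eisenstein series, not in $\varphi_\pi$. And your Eulerian argument fails because the global cusp form $\varphi_\pi(h)$ does not factor as a product over places; uniqueness of $W(\mathcal E_\tau)$ only gives factorization of the section $f_{W(\mathcal E_\tau),s}$, which you already have. (A minor point: the involution $\iota$ here is \emph{not} the one realizing the contragredient---for $Sp_{2k}$ it is the nontrivial conjugation above, arising from the specific shape of the open-orbit representative $\delta$, not from duality.)
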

The theorem follows from the Euler product expansion of integral \eqref{dou2}, and we will prove it in Sec. \ref{not3}. A similar theorem with the same proof holds in the set up of \cite{PS-R1}, \cite{G-H}. 

We claim that all known constructions of global integrals, which represent the above  $L$- functions, are derived from identity \eqref{dou3}. The way to derive them is as follows. Suppose we are given a global integral which unfolds to a certain global model of $\pi$, via a Fourier coefficient, or a period integral etc. This model may or may not be unique. Then, to derive this global integral from identity \eqref{dou3}, we apply this model to $\xi(\varphi_{\pi^\iota},f_{\mathcal{E}_\tau,s})$, thinking of it as a cusp form in $\pi$. We compute it by means of Fourier expansions and the use of identities relating Eisenstein series as in \cite{G-S}. Then at the end of this computation we obtain, as an inner integration, the same global integral which we started with. This is an important point, and we would like to emphasize it. Starting with the global model above on the function $\xi(\varphi_{\pi^\iota},f_{\mathcal{E}_\tau,s})$, we can always first  unfold the Eisenstein series which appears in identity \eqref{dou3}. However, this is {\sl not} what we do. Instead, by performing some Fourier expansions, and using the identities relating Eisenstein series as in \cite{G-S}, we can derive a "simpler" global integral, which we conjecture to be Eulerian and represent the above $L$-function.

Notice that this procedure implies that for every given global model defined on the space of $\pi$, one can construct an Eulerian integral which unfolds to this model. 
We also predict that all future constructions of global integrals which  represent the above $L$-functions, will also be derived from  identity \eqref{dou3} by a similar procedure. 

Clearly, all the above refers to those cases for which an identity similar to identity \eqref{dou3}, satisfying Theorem \ref{th1}, exists. There are many other global integral constructions which unfold to $L$- functions not represented by the doubling method.

In this paper, we give several examples of the procedure described above. Rather than concentrate on one general example, we decided to give several low rank examples. We believe that by doing so it will be easier for the reader to see the general idea. 

Our first example is the famous Jacquet-Langlands integral construction for the standard $L$-function attached to an irreducible, automorphic, cuspidal representation of $GL_2(\bf A)$. See \cite{J-L}. In Section \ref{gen}, we show how to derive the Jacquet-Langlands integral from the doubling integral of Piatetski-Shapiro and Rallis in \cite{PS-R1}, representing the same $L$-function. 

In Section \ref{new}, we give two examples of  global  integrals involving New Way type integrals. The first is the famous integral of Piatetski-Shapiro and Rallis in \cite{PS-R2}. It represents the standard $L$-function for an irreducible, automorphic, cuspidal representation $\pi$ of $Sp_{2k}(\bf A)$. This example we do in general. In the second example, we show how can one use our procedure to construct new global integrals. Here we extend the work of \cite{PS-R2} and construct a New Way type integral which unfolds to the same Fourier coefficient used in \cite{PS-R2}. We then conjecture that this integral is Eulerian and represents the tensor product $L(\pi\times\tau,s)$, where $\tau$ is an irreducible, automorphic, cuspidal representation of $GL_n(\bf A)$.

In the last section we give the example of the global integral construction of \cite{G-J-R-S} for $\pi$ on the double cover of $Sp_4(\bf A)$ and $\tau$ on $GL_2(\bf A)$, involving a Fourier-Jacobi mixed model of $\pi$ with respect to an irreducible, automorphic, cuspidal representation of $SL_2(\bf A)$. We explain through our procedure how does this integral show up.

\section{Notations and Preliminarries}\label{notation}
\subsection{Some General Notations}\label{not1}
We start with some general notations. 
Since most of the examples in this note are for the symplectic group, we fix some notations for this specific group. 

For a positive integer $k$, let $J_{k}$ denote the matrix of size $k$, which has ones on the  anti-diagonal and zeros elsewhere. We realize $Sp_{2k}$ as 
\begin{equation}\label{sp1}\notag
Sp_{2k}=\{ g\in GL_{2k}\ :\ g^t\begin{pmatrix} &J_k\\ -J_k&\end{pmatrix} g=
\begin{pmatrix} &J_k\\ -J_k&\end{pmatrix}\}.
\end{equation}
Denote 
$$
Mat_k^0=\{A\in Mat_k\ :\ A^tJ_k=J_kA\}. 
$$
We denote by $Q_k$ the (standard) Siegel parabolic subgroup of $Sp_{2k}$. More generally, for a partition $k_1,...,k_{r+1}$ of $k$, we denote by $Q_{k_1,...,k_r,2k}$ the standard parabolic subgroup of $Sp_{2k}$, whose Levi part is isomorphic to $GL_{k_1}\times\cdots\times GL_{k_r}\times Sp_{2k_{r+1}}$. We denote its unipotent radical by $U_{k_1,...,k_r,2k}$.

For two positive integers $n_1$ and $n_2$,  denote by $0_{n_1\times n_2}$ the zero matrix in $Mat_{n_1\times n_2}$. We shall write $0_n$ for $0_{1\times n}$. Let $e_{i,j}$ denote the matrix of size $2k$, which has one at the $(i,j)$ entry, and zero elsewhere. Let 
$$
e_{i,j}'=e_{i,j}-e_{2k-j+1,2k-i+1},\quad  if \quad 1\le i,j\le k, 
$$
and
$$  
e_{i,j}'=e_{i,j}+e_{2k-j+1,2k-i+1},\quad  if \quad 1\le i\le k,\quad j>k.
$$

Let $F$ be a number field and let ${\bf A}$ denote its ring of Adeles. Fix $\psi$, a non-trivial character of $F\backslash {\bf A}$. Let $Sp_{2k}^{(2)}({\bf A})$ denote the metaplectic double cover of $Sp_{2k}({\bf A})$. We denote a theta series on $Sp^{(2)}_{2k}({\bf A})$, corresponding to $\psi$ and a Schwartz function $\phi\in \mathcal{S}({\bf A}^k)$, by $\theta_{\psi,Sp_{2k}}^\phi$, or shortly $\theta_{\psi,2k}^\phi$. We will rely on the well-known properties of this series. For notations and the action of the Weil representation we  use the formulas as given in \cite{G-R-S1}, page 188.

\subsection{Eisenstein Series}\label{not2}
In this sub-section we consider certain Eisenstein series which we will use later. We also state some relevant identities which they satisfy. All the results in this sub-section are proved in \cite{G-S}, or follow from it. 

For an irreducible, automorphic, cuspidal representation $\tau$ of $GL_2({\bf A})$, and a natural number $m$, let $\Delta(\tau,m)$ denote the Speh representation of $GL_{2m}({\bf A})$. This representation was studied in \cite{J}. Let $Q_{2m}$ denote the Siegel parabolic subgroup of $Sp_{4m}$.  Consider an Eisenstein series $E(f_{\Delta(\tau,m),s})$ on $Sp_{4m}({\bf A})$, corresponding to a smooth holomorphic section $f_{\Delta(\tau,m),s}$ of $Ind_{Q_{2m}({\bf A})}^{Sp_{4m}({\bf A})}\Delta(\tau,m)|\text{det}\cdot|^s$ (normalized induction). See \cite{G-S}, Sec. 2. Sometimes it will be convenient to drop the notation of the section and simply denote $E_{\tau,m}(\cdot,s)$. 

We will also need similar Eisenstein series $E^{(2)}(f_{\Delta(\tau,m)\gamma_\psi,s})$ on  $Sp_{4m}^{(2)}({\bf A})$. It corresponds to a smooth, holomorphic section of 
$Ind_{Q_{2m}^{(2)}({\bf A})}^{Sp_{4m}^{(2)}({\bf A})}\Delta(\tau,m)
\gamma_\psi|\text{det}\cdot|^s$. Here $\gamma_\psi$ is the Weil factor attached to the character $\psi$. See \cite{G-S}, Sec. 2. Again, sometimes it will be convenient to simply use the notation $E^{(2)}_{\tau,m}(\cdot,s)$. 

Consider the unipotent radical $U_{1^2,4m}$. Let $\psi_1$ denote the following character of $U_{1^2,4m}(\bf A)$ . For $u=(u_{i,j})\in U_{1^2,4m}$,  
$$
\psi_1(u)=\psi(u_{1,2}). 
$$
Let $U_{1^2,4m}^0$ denote the subgroup of $U_{1^2,4m}$ consisting of all matrices 
$u=(u_{i,j})\in U_{1^2,4m}$, such that $u_{2,j}=0$ for all $3\le j\le 4m-2$. For $t\in F^*$, let $\psi_{U_{1^2,4m}^0,t}$ denote the following character of $U_{1^2,4m}^0(\bf A)$, 
$$
\psi_{U_{1^2,4m}^0,t}(u)=\psi(u_{1,2}+tu_{2,4m-1}).
$$
 There is a natural projection $j=j_{4m-3}$, from $U_{1^2,4m}$ onto the Heisenberg group in $4m-3$ variables ${\mathcal H}_{4m-3}$. In coordinates, given $u=(u_{i,j})\in U_{1^2,4m}$, 
$$
j(u)=(u_{2,3},u_{2,4},\ldots,u_{2,4m-2},u_{2,4m-1}). 
$$
Here, for elements in ${\mathcal H}_{4m-3},$ we use the notations given in \cite{G-R-S1}, Sec. 1.
Of course, we have a similar projection from $U_{1,2m}$ onto $\mathcal{H}_{2m-1}$. It is an isomorphism. We denote the inverse map by $i_{2m-1}$.

From \cite{G-S} Theorems 7.1, 8.1, for all $\tilde{h}=(h,\epsilon)\in Sp_{4(m-1)}^{(2)}({\bf A})$, we have 
\begin{equation}\label{id1}
E_{\tau,m-1}^{(2)}(\tilde{h},s)=
\int\limits_{U_{1^2,4m}(F)\backslash U_{1^2,4m}({\bf A})}
\theta_{\psi^{-1},4(m-1)}^\phi(j(u)\tilde{h})
E_{\tau,m}(ut(h),s)\psi_1^{-1}(u)du.
\end{equation}
In a similar way, 
\begin{equation}\label{id2}
E_{\tau,m-1}(h,s)=
\int\limits_{U_{1^2,4m}(F)\backslash U_{1^2,4m}({\bf A})}
\theta_{\psi^{-1},4(m-1)}^\phi(j(u)\tilde{h})
E_{\tau,m}^{(2)}(ut(\tilde{h}),s)\psi_1^{-1}(u)du.
\end{equation}
Here, $t(h)=diag(I_2,h,I_2)$ and $t(\tilde{h})=(t(h),\epsilon)$. We used the shorthand notation for Eisenstein series in order to stress the fact that the r.h.s. of \eqref{id1}(resp. \eqref{id2}) is equal to an Eisenstein series on $Sp_{4(m-1)}^{(2)}({\bf A})$ (resp. $Sp_{4(m-1)}({\bf A})$), namely the Eisenstein series indicated on the l.h.s. of \eqref{id1}(resp. \eqref{id2}). A more precise statement of \eqref{id1} is that given a smooth, holomorphic section $f_{\Delta(\tau,m),s}$ of $Ind_{Q_{2m}({\bf A})}^{Sp_{4m}({\bf A})}\Delta(\tau,m)|\text{det}\cdot|^s$, and a Schwartz function $\phi\in \mathcal{S}({\bf A}^{2m-2})$, there is a smooth, meromorphic section $\Lambda(f_{\Delta(\tau,m),s},\phi)$ of $Ind_{Q_{2m-2}^{(2)}({\bf A})}^{Sp_{4m-4}^{(2)}({\bf A})}\Delta(\tau,m-1)\gamma_{\psi^{-1}}|\text{det}\cdot|^s$, such that, for all $\tilde{h}=(h,\epsilon)\in Sp_{4(m-1)}^{(2)}({\bf A})$, we have  
$$
E^{(2)}(\Lambda(f_{\Delta(\tau,m),s}))(\tilde{h})=\int\limits_{U_{1^2,4m}(F)\backslash U_{1^2,4m}({\bf A})}
\theta_{\psi^{-1},4(m-1)}^\phi(j(u)\tilde{h})
E(f_{\Delta(\tau,m),s})(ut(h),s)\psi_1^{-1}(u)du.
$$
The section $\Lambda(f_{\Delta(\tau,m),s},\phi)$ is given, for $Re(s)$ sufficiently large by an explicit Adelic unipotent integration, and continues to a meromorphic function. A similar theorem holds for \eqref{id2}. Moreover, in \cite{G-S} the above identities are proved to hold for normalized Eisenstein series, as well. 

We will use another identity relating Eisenstein series. We first fix some more notations. Let $r$ and $k$ denote two positive integers such that $k\ge 3r$. Consider the unipotent radical  $U_{2r,2k}$. It consists of all matrices of the form
\begin{equation}\label{id3}
u=\begin{pmatrix} I_{2r}&x_1&y&x_2&z\\ &I_r&&&\star\\  &&I_{2(k-3r)}&&\star\\
&&&I_r&\star\\ &&&&I_{2r}\end{pmatrix}\in Sp_{2k}. 
\end{equation}
Define a character $\psi_{U_{2r,2k}}$ of $U_{2r,2k}(\bf A)$, as follows.  Let $u\in U_{2r,2k}(\bf A)$ of the form \eqref{id3}. For $i=1,2$, write $x_i=\begin{pmatrix} x_{i,1}\\ x_{i,2}\end{pmatrix}$, where $x_{i,j}\in Mat_r(\bf A)$. Then 
$$
\psi_{U_{2r,2k}}(u)=\psi(\text{tr}(x_{1,1}+x_{2,2})).
$$
This is a special case of the character defined in \cite{G-S} in equation (1.2). The stabilizer of $\psi_{U_{2r,2k}}$ inside the Levi subgroup $GL_{2r}({\bf A})\times Sp_{2(k-2r)}(\bf A)$ is the group  $Sp_{2r}({\bf A})\times Sp_{2(k-3r)}(\bf A)$, embedded in $Sp_{2k}(\bf A)$ as 
\begin{equation}\label{id4}
t(g,h)= \text{diag}(g,j(g,h),g^*), \ \ j(g,h)=\begin{pmatrix}
g_1&&g_2\\ &h&\\ g_3&&g_4\end{pmatrix}, \ \ g=\begin{pmatrix} g_1&g_2\\ g_3&g_4 \end{pmatrix}\in Sp_{2r}(\bf A).
\end{equation}

To state the third identity that we need, let $\sigma$ denote an irreducible, automorphic, cuspidal representation of $SL_2({\bf A})$, and let $r=1$, $k=6$. For $\tau$ as above, form the Eisenstein series $E_{\tau,\sigma}(\cdot,s)$ on $Sp_6({\bf A})$ associated with $Ind_{Q_{2^2,6}({\bf A})}^{Sp_6({\bf A})}(\tau\times\sigma)|\text{det}\cdot|^s$. Then, it follows from \cite{G-S}, Theorem 1.1 that, for any cusp form $\varphi_\sigma$ in the space of $\sigma$,
\begin{equation}\label{id5}
E_{\tau,\sigma}(h,s)=
\int\limits_{SL_2(F)\backslash SL_2({\bf A})}
\int\limits_{U_{2,12}(F)\backslash U_{2,12}({\bf A})}
\varphi_\sigma(g^\iota)E_{\tau,3}(ut(g,h),s)\psi_{U_{2,12}}^{-1}(u)dudg.
\end{equation}
Here, for a matrix $h\in SL_2$, $h^\iota=J_2hJ_2$, where we recall that $J_2=\begin{pmatrix} &1\\ 1&\end{pmatrix}$. Again, the identity \eqref{id5} is written using a shorthand notation, dropping mention of the sections. The point is that the r.h.s. of \eqref{id5} is equal to an Eisenstein series on $Sp_6(\bf A)$, namely an Eisenstein series corresponding to the parabolic induction $Ind_{Q_{2^2,6}({\bf A})}^{Sp_6({\bf A})}(\tau\times\sigma)|\text{det}\cdot|^s$. Note the form of the r.h.s. of \eqref{id5}. It is a kernel integral, where the kernel function on $SL_2({\bf A})\times Sp_6(\bf A)$ is the $\psi_{U_{2,12}}$-Fourier coefficient of the Eisenstein series $E_{\tau,3}$ on $Sp_{12}(\bf A)$. Then we integrate this kernel function against a cusp form on $SL_2(\bf A)$ to obtain an Eisenstein series $E_{\tau,\sigma}(\cdot,s)$ on $Sp_6(\bf A)$. The precise statement of \eqref{id5} says that for a given smooth, holomorphic section $f_{\Delta(\tau,3),s}$ of $Ind_{Q_6({\bf A})}^{Sp_{12}({\bf A})}\Delta(\tau,3)|\text{det}\cdot|^s$, and a cusp form $\varphi_\sigma$ in the space of $\sigma$, there is a smooth, meromorphic section $\Lambda(f_{\Delta(\tau,3),s},\varphi_\sigma)$ of $Ind_{Q_{2,2}({\bf A})}^{Sp_6({\bf A})}(\tau\times\sigma)|\text{det}\cdot|^s$, such that, for all $h\in Sp_6(\bf A)$,
$$
E(\Lambda(f_{\Delta(\tau,3),s},\varphi_\sigma))(h)=
\int\limits_{SL_2(F)\backslash SL_2({\bf A})}
\int\limits_{U_{2,12}(F)\backslash U_{2,12}({\bf A})}
\varphi_\sigma(g^\iota)E(f_{\Delta(\tau,3),s})(ut(g,h),s)\psi_{U_{2,12}}^{-1}(u)dudg.
$$
The section $\Lambda(f_{\Delta(\tau,3),s},\varphi_\sigma)$ is given, for $Re(s)$ sufficiently large, by an explicit Adelic integration, and continues to a meromorphic function. Moreover, in \cite{G-S} the above identity is proved to hold for normalized Eisenstein series, as well.

\subsection{Proof of Theorem \ref{th1}}\label{not3}
We prove Theorem \ref{th1} for the set up of \cite{C-F-G-K1}. For simplicity of notation we prove the theorem for $G=Sp_{2k}$. The proof for the other split classical groups is almost entirely the same, and similarly for the set up of \cite{PS-R1}, where the classical group need not be split. The proof for the case in \cite{G-H} is also quite similar.

We specify some of the data in \eqref{dou2} and \eqref{dou4}. The group $H$ is $Sp_{4nk}$, and $U=U_{(2k)^{n-1},4nk}$. Recall that this is the unipotent radical of the standard parabolic subgroup of $H$, with Levi part isomorphic to $GL_{2k}^{n-1}\times Sp_{4k}$. The character $\psi_U$ is stabilized by $G({\bf A})\times G(\bf A)$, embedded in $H(\bf A)$ by
$t(g,h)=\text{diag}(g^{\Delta_{n-1}},j(g,h),(g^*)^{\Delta_{n-1}})$, where $g^{\Delta_{n-1}}=\text{diag}(g,...,g)$, $n-1$ times, and, for $g,h\in Sp_{2k}(\bf A)$, $j(g,h)$ has the same shape as \eqref{id4}. The representation $\mathcal{E}_\tau$ is the Speh representation $\Delta(\tau,2k)$, and $W(\Delta(\tau,2k))$ denotes the model referred to as a Whittaker-Speh-Shalika model in \cite{C-F-G-K1}. It is obtained by applying a Fourier coefficient on $\Delta(\tau,2k)$, corresponding to $\psi$ and the partition $(n^{2k})$. Next, the element $\delta$ in \eqref{dou4} is
$$
\delta=\begin{pmatrix}0&I_{2k}&0&0\\0&0&0&I_{2k(n-1)}\\-I_{2k(n-1)}&0&0&0\\0&I_{2k}&I_{2k}&0\end{pmatrix}.
$$
Finally, the unipotent group $U_0$ is a subgroup of $U_{(2k)^{n-1},4nk}$, realizing the quotient
$\delta^{-1}Q_{2nk}\delta\cap U_{(2k)^{n-1},4nk}\backslash U_{(2k)^{n-1},4nk}$.

Denote, for  $b\in G({\bf A})$, $b^\iota=\begin{pmatrix}&I_k\\I_k\end{pmatrix}b\begin{pmatrix}&I_k\\I_k\end{pmatrix}$.

The proof is a simple consequence of the unfolding process of the global integrals \eqref{dou2} and \eqref{dou3}. A careful inspection of the unfolding process implies the following two facts. 

First, as a function of $g,h\in G({\bf A})$, the integral 
\begin{equation}\label{dou41}
\int\limits_{ U_0({\bf A})}
f_{W({\mathcal E}_\tau),s}(\delta u_0t(g,h))\psi^{-1}_U(u_0)du_0
\end{equation}
is left invariant by the following (almost) diagonal embedding of $G({\bf A})$. For all $b\in G({\bf A})$,
\begin{equation}\label{dou5}
\int\limits_{ U_0({\bf A})}
f_{W({\mathcal E}_\tau),s}(\delta u_0t(bg,b^\iota h))\psi^{-1}_U(u_0)du_0=
\int\limits_{ U_0({\bf A})}
f_{W({\mathcal E}_\tau),s}(\delta u_0t(g,h))\psi^{-1}_U(u_0)du_0.
\end{equation}
The second fact is that carrying out the unfolding process for the integral on the right hand side of \eqref{dou3}, we obtain for $\text{Re}(s)$ large, 
\begin{equation}\label{dou6}
\xi(\varphi_\pi,f_{\mathcal{E}_\tau,s})(g)=
\int\limits_{G({\bf A})}
\int\limits_{U_0({\bf A})}
\varphi_\pi(h)f_{W({\mathcal E}_\tau),s}(\delta u_0t(g,h))\psi^{-1}_U(u_0)du_0dh.
\end{equation}

A similar calculation was also done in \cite{G-S} Sections 2 an 3, and a similar identity is given in  \cite{G-S} equation (3.26). 

Using \eqref{dou5} in \eqref{dou6} and a simple change of variables, we get for $\text{Re}(s)$ large, 
\begin{equation}\label{dou6.1}
\xi(\varphi_\pi,f_{\mathcal{E}_\tau,s})(g)=
\int\limits_{G({\bf A})}
\int\limits_{U_0({\bf A})}
\varphi_\pi(g^\iota h)f_{W({\mathcal E}_\tau),s}(\delta u_0t(1,h))\psi^{-1}_U(u_0)du_0dh.
\end{equation}
Hence, for $\text{Re}(s)$ large, 
\begin{equation}\label{dou6.2}
\xi(\varphi_\pi,f_{\mathcal{E}_\tau,s})=
\int\limits_{G({\bf A})}(
\int\limits_{U_0({\bf A})}
f_{W({\mathcal E}_\tau),s}(\delta u_0t(1,h))\psi^{-1}_U(u_0)du_0)\iota(\pi(h)\varphi_\pi) dh,
\end{equation}
where for a cusp form $\varphi_\pi$, $\iota(\varphi_\pi)(g)=\varphi_\pi(g^\iota)$. Fix an isomorphism $\ell: \otimes_v \pi_v\mapsto \pi$, and assume that $\varphi_\pi$ is the image of a decomposable vector $\otimes_v\varphi_{\pi_v}$. Assume, also, that $f_{W({\mathcal E}_\tau),s}$ is a product of local sections $f_{W({\mathcal E}_{\tau_v}),s}$, where $W({\mathcal E}_{\tau_v})$ denotes the corresponding local unique model of $\mathcal{E}_{\tau_v}$. Let $S$ be a finite set of places, containing the Archimedean places, and outside which $\pi$, $\tau$ and $\psi$ are unramified, and for $v\notin S$, $\varphi_{\pi_v}=\varphi^0_{\pi_v}$ is a pre-chosen unramified vector, and $f_{W({\mathcal E}_{\tau_v}),s}=f^0_{W({\mathcal E}_{\tau_v}),s}$ is unramified and normalized, such that its value at the identity is the unique unramified element in $W({\mathcal E}_{\tau_v})$, which has the value 1 at $I$. For $\text{Re}(s)$ large, 
\begin{equation}\label{dou6.3}
\xi(\varphi_\pi,f_{\mathcal{E}_\tau,s})=
(\iota\circ\ell)(\otimes_v\int\limits_{G(F_v)}(
\int\limits_{U_0(F_v)}
f_{W({\mathcal E}_{\tau_v}),s}(\delta u_0t(1,h))\psi^{-1}_{v,U}(u_0)du_0)\pi_v(h)\varphi_{\pi_v} dh).
\end{equation}
Denote the factor at $v$, inside $\iota\circ\ell$, in \eqref{dou6.3}, by $\xi(\varphi_{\pi_v},f_{W({\mathcal E}_{\tau_v}),s})$. Let us compute this factor at $v\notin S$. Let $K_{G,v}$, $K_{H,v}$ denote the standard maximal compact subgroups of $G(F_v)$ and $H(F_v)$ respectively. For $r\in K_{G,v}$, change variable $h\mapsto rh$ and integrate over $r\in K_{G,v}$, taking the measure of $K_{G,v}$ to be 1. We have 
$$
f^0_{W({\mathcal E}_{\tau_v}),s}(\delta u_0t(1,rh))=f^0_{W({\mathcal E}_{\tau_v}),s}(\delta u_0t(r^\iota,rh)).
$$
Using \eqref{dou5},
\begin{equation}\label{dou6.4}
\xi(\varphi^0_{\pi_v},f^0_{W({\mathcal E}_{\tau_v}),s})=
\int\limits_{G(F_v)}(
\int\limits_{U_0(F_v)}
f^0_{W({\mathcal E}_{\tau_v}),s}(\delta u_0t(1,h))\psi^{-1}_{v,U}(u_0)du_0)\int\limits_{K_{G,v}}\pi_v(rh)\varphi^0_{\pi_v}dr dh.
\end{equation}
The $dr$-integration in \eqref{dou6.4} is equal to $\omega_{\pi_v}(h)\varphi^0_{\pi_v}$, where $\omega_{\pi_v}$ is the unique spherical function attached to $\pi_v$. Hence, for $v\notin S$, and $\text{Re}(s)$ large, 
\begin{equation}\label{dou6.5}
\xi(\varphi^0_{\pi_v},f^0_{W({\mathcal E}_{\tau_v}),s})=
(\int\limits_{G(F_v)}
\int\limits_{U_0(F_v)}
f^0_{W({\mathcal E}_{\tau_v}),s}(\delta u_0t(1,h))\psi^{-1}_{v,U}(u_0)\omega_{\pi_v}(h)du_0dh)\varphi^0_{\pi_v}.
\end{equation}
The integral in \eqref{dou6.5} is equal to $\frac{L(\pi_v\times\tau_v,s+\frac{1}{2})}{d_v(s)}$, where the denominator comes from the normalizing factor of the Eisenstein series in the global integral \eqref{dou2}. This is the unramified computation carried in \cite{C-F-G-K1} (and similarly in \cite{PS-R1}, \cite{G-H}.) Denote $\varphi_\pi^S=\otimes_{v\notin S}\varphi^0_{\pi_v}$. Then it follows that  
\begin{equation}\label{dou6.6}
\xi(\varphi_\pi,f_{\mathcal{E}_\tau,s})=\frac{L^S(\pi\times\tau,s+\frac{1}{2})}{d^S(s)}
(\iota\circ\ell)(\otimes_{v\in S}\xi(\varphi_{\pi_v},f_{W({\mathcal E}_{\tau_v}),s})\otimes \varphi_\pi^S).
\end{equation}
For $v\in S$, a similar argument, using the $K_{H,v}$-finiteness of $f_{W({\mathcal E}_{\tau_v}),s}$, shows that $\xi(\varphi_{\pi_v},f_{W({\mathcal E}_{\tau_v}),s})$ is equal to a finite sum of elements of the form
\begin{equation}\label{dou6.7}
(\int\limits_{G(F_v)}
\int\limits_{U_0(F_v)}
f'_{W({\mathcal E}_{\tau_v}),s}(\delta u_0t(1,h))\psi^{-1}_{v,U}(u_0)<\pi_v(h)\varphi_v,\alpha_{\pi_v}>du_0dh)\beta_{\pi_v},
\end{equation}
where $<\pi_v(h)\varphi_{\pi_v},\alpha_v>$ denotes a matrix coefficient of $\pi_v$ corresponding to $\varphi_{\pi_v}$ and a vector $\alpha_v$ in the dual of $\pi_v$; $\beta_{\pi_v}$ is a vector in the space of $\pi_v$. Denote the integral in \eqref{dou6.7} by $\mathcal{L}(\varphi_{\pi_v},\alpha_{\pi_v},f'_{W({\mathcal E}_{\tau_v}),s})$. This integral is absolutely convergent for $\text{Re}(s)$ large, and continues to a meromorphic function in the complex plane. This is the local integral of the doubling method. See \cite{C-F-G-K1}. All in all, we can express the meromorphic function $\xi(\varphi_\pi,f_{\mathcal{E}_\tau,s})$ as a finite sum of elements of the form
\begin{equation}\label{dou6.8}
\frac{L^S(\pi\times\tau,s+\frac{1}{2})}{d^S(s)}
\prod_{v\in S}\mathcal{L}(\varphi_{\pi_v},\alpha_{\pi_v},f'_{W({\mathcal E}_{\tau_v}),s})\cdot (\iota\circ\ell)(\otimes_{v\in S}\beta_{\pi_v}\otimes \varphi_\pi^S)^\iota.
\end{equation}
In particular, $\xi(\varphi_\pi,f_{\mathcal{E}_\tau,s})$ lies in the space of $\pi^\iota$. This completes the proof of Theorem \ref{th1}.

\subsection{The Lemma on Exchanging Roots}\label{not4}
We will need the lemma on exchanging roots, Lemma 7.1 in \cite{G-R-S2}. Typically, we have $F$-unipotent subgroups $A, B, C, D, X, Y$, of $H$, such that $X, Y$ are abelian, intersect $C$ trivially, normalize $C$, and satisfy $[X,Y]\subset C$, $B=CY$, $D=CX$, $A=BX=DY$. We have a nontrivial character $\psi_C$ of $C(\bf A)$, trivial on $C(F)$, such that $X({\bf A}), Y(\bf A)$ preserve $\psi_C$, when acting by conjugation, and, finally, we assume that the form $(x,y)\mapsto \psi_C([x,y])$ defines a non-degenerate bi-multiplicative pairing on $X({\bf A})\times Y(\bf A)$. Extend the character $\psi_C$ to a character $\psi_B$ of $B(\bf A)$ and a character $\psi_D$ of $D(\bf A)$, by making $\psi_B$ trivial on $Y(\bf A)$ and $\psi_D$ trivial on $X(\bf A)$.
\begin{lemma}\label{lemmix2}
1. Let $f$ be a smooth, automorphic function on $H(\bf A)$, of moderate growth. Then for every  $h\in H(\bf A)$,
\begin{equation}\label{mix10}
\int\limits_{B(F)\backslash B({\bf A})}
f(vh)\psi^{-1}_B(v)dv=
\int\limits_{Y({\bf A})}
\int\limits_{D(F)\backslash D({\bf A})}
f(uyh)\psi^{-1}_D(u)dudy.
\end{equation}
2. Let $\mathcal A$ be a space of smooth automorphic functions of moderate growth on $H(\bf A)$. Assume that $\mathcal A$ is invariant to right translations by elements of $H(\bf A)$, and that, as an $H(\bf A)$-module, it satisfies the Dixmier-Malliavin Lemma (\cite{D-M}). Let $\Omega\subset H(\bf A)$ be the subset of elements $h$ which satisfy the following two properties. 
\begin{enumerate}
	\item $[Y({\bf A}),h]\subset D$ and $\psi_D([Y({\bf A}),h])=1$;
	\item For every $x\in X(\bf A)$, $y\in Y(\bf A)$, $[yxy^{-1},h]\in D(\bf A)$ and
	$\psi_D([yxy^{-1},h])=1$.	
\end{enumerate}
Then for each $f\in \mathcal A$, there exists  $\varphi\in {\mathcal A}$ such that for all $h\in \Omega$,
\begin{equation}\label{mix11}
\int\limits_{B(F)\backslash B({\bf A})}
f(vh)\psi_B^{-1}(v)dv=
\int\limits_{D(F)\backslash D({\bf A})}
\varphi(uh)\psi^{-1}_D(u)du.
\end{equation}
3. With the same assumptions as in the beginning of (2), let $\Omega'\subset H({\bf A})$ be the subset of elements $h$ which satisfy the following three properties. 
\begin{enumerate}
	\item $h$ normalizes $Y({\bf A})$ and preserves the measure $dy$;
	\item $h$ normalizes $X({\bf A})$;
	\item For every $x\in X(\bf A)$, $y\in Y(\bf A)$, $[h,[y,x]]\in D(\bf A)$ and
	$\psi_D([h,[y,x]])=1$.	
\end{enumerate}
Then the assertion of (2) holds for $h\in \Omega'$. 
\end{lemma}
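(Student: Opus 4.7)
The plan is to deduce all three parts from a single Poisson-summation identity on the dual pair of compact abelian groups $X(F)\backslash X(\mathbf{A})$ and $Y(F)\backslash Y(\mathbf{A})$, made dual by the non-degenerate pairing $(x,y)\mapsto\psi_C([x,y])$ with $X(F)$ and $Y(F)$ as mutual annihilators (since $\psi_C$ kills $C(F)$). First I would introduce the twisted coefficient $\tilde f(g)=\int_{C(F)\backslash C(\mathbf{A})}f(cg)\psi_C^{-1}(c)\,dc$, which transforms by $\psi_C$ on the left under $C(\mathbf{A})$ and is left invariant under $X(F)$ and $Y(F)$ (using that $X,Y$ normalize $C$, preserve $\psi_C$, and the $H(F)$-automorphy of $f$). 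With $B=CY$ and $\psi_B|_Y=1$, the LHS of \eqref{mix10} becomes $\int_{Y(F)\backslash Y(\mathbf{A})}\tilde f(yh)\,dy$; with $D=CX$ and $\psi_D|_X=1$ the RHS becomes $\int_{Y(\mathbf{A})}\int_{X(F)\backslash X(\mathbf{A})}\tilde f(xyh)\,dx\,dy$. The change $c\mapsto c[x,y]^{-1}$ in the $C$-integral gives $\tilde f(xyh)=\psi_C([x,y])\tilde f(yxh)$, whence $\tilde f(x\,y_0 y\,h)=\psi_C([x,y_0])\tilde f(xyh)$ for $y_0\in Y(F)$.

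For part (1), I would unfold the outer $Y(\mathbf{A})$-integration as $Y(F)$ times $Y(F)\backslash Y(\mathbf{A})$ and collect the twist, rewriting the RHS as
\[
\int_{Y(F)\backslash Y(\mathbf{A})}\int_{X(F)\backslash X(\mathbf{A})}\tilde f(xyh)\Bigl(\sum_{y_0\in Y(F)}\psi_C([x,y_0])\Bigr)dx\,dy.
\]
Distributionally on the compact group $X(F)\backslash X(\mathbf{A})$, the parenthesized character sum represents the Dirac mass at the identity coset, because the characters $x\mapsto\psi_C([x,y_0])$ for $y_0\in Y(F)$ exhaust the Pontryagin dual via the annihilator statement above. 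The $x$-integration then evaluates at $x=1$, giving $\tilde f(yh)$ and hence the LHS. I would make this rigorous by smoothing: insert an approximation to the identity on $X(\mathbf{A})$, pass to the limit using moderate growth estimates on $\tilde f$.

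For parts (2) and (3), to replace the outer $Y(\mathbf{A})$-integration by an honest $\varphi\in\mathcal{A}$, I would invoke Dixmier--Malliavin to write $f=\sum_{i=1}^N R_{\alpha_i}f_i$ with $\alpha_i\in C_c^\infty(H(\mathbf{A}))$ and $f_i\in\mathcal{A}$. The conditions defining $\Omega$ are precisely those that let me move $h$ past a $Y(\mathbf{A})$-variable inside the $(D,\psi_D)$-Fourier coefficient without character cost: the substitution $u\mapsto u[y,h]^{-1}$ uses $[y,h]\in D(\mathbf{A})$ with $\psi_D([y,h])=1$ (condition (1)) to give
\[
\int_{D(F)\backslash D(\mathbf{A})} f(uyh)\psi_D^{-1}(u)\,du=\int_{D(F)\backslash D(\mathbf{A})} f(uhy)\psi_D^{-1}(u)\,du,
\]
while condition (2) handles the $X$-variable after its conjugation by $y$ within the exchange of roots. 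This converts the outer $Y$-integration into a right translation of $f$: formally $\int_{Y(\mathbf{A})} R_y(R_{\alpha_i}f_i)\,dy=R_{\beta_i}f_i$, where $\beta_i(\eta)=\int_{Y(\mathbf{A})}\alpha_i(y\eta)\,dy$ is smooth and compactly supported modulo $Y(\mathbf{A})$. Summing gives $\varphi=\sum_i R_{\beta_i}f_i\in\mathcal{A}$ by right-translation closure, and \eqref{mix11} follows. Part (3) is proved by the same template: the normalization of $X(\mathbf{A})$ and $Y(\mathbf{A})$ by $h$ in $\Omega'$ allows the change of variable $y\mapsto h^{-1}yh$ and the analogous change for $x$, while the triviality of $\psi_D$ on the double commutator $[h,[y,x]]$ plays the role that conditions (1) and (2) of $\Omega$ play in part (2).

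The hard part will be the convergence/interpretation issues at both stages. In part (1), the character sum $\sum_{y_0\in Y(F)}\psi_C([x,y_0])$ is only conditionally convergent as a distribution, so justifying the Poisson step requires careful control of the smoothing limit on $X(F)\backslash X(\mathbf{A})$. In parts (2)--(3), the function $\beta_i$ is compactly supported only \emph{modulo} $Y(\mathbf{A})$, so $R_{\beta_i}f_i$ is not an obviously convergent convolution against a moderate-growth $f_i$; the commutator-character conditions of $\Omega$ and $\Omega'$ must be used to show that the ``divergent'' $y$-directions in the $Y$-average are precisely those suppressed by the hypotheses, so that the Dixmier--Malliavin output really lands in $\mathcal{A}$.
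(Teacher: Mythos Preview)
Your approach to part (1) via Poisson summation on the dual compact groups $X(F)\backslash X(\mathbf A)$ and $Y(F)\backslash Y(\mathbf A)$ is essentially the content of Lemma 7.1 in \cite{G-R-S2}, which the paper simply cites; your formulation is fine modulo the routine smoothing argument you mention.

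Parts (2) and (3), however, contain a genuine gap. You apply Dixmier--Malliavin with test functions $\alpha_i\in C_c^\infty(H(\mathbf A))$ on the \emph{full} group, and then attempt to absorb the outer $\int_{Y(\mathbf A)}dy$ into a right convolution $R_{\beta_i}$ with $\beta_i(\eta)=\int_{Y(\mathbf A)}\alpha_i(y\eta)\,dy$. As you yourself note, $\beta_i$ is only compactly supported modulo $Y(\mathbf A)$, so $R_{\beta_i}f_i$ is a divergent integral against a moderate-growth function; nothing in the hypotheses on $\Omega$ or $\Omega'$ provides decay in the $y$-direction. The commutator conditions defining $\Omega$ only allow you to \emph{reorder} variables without character cost; they do not suppress any direction of integration.

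The paper's device for producing the missing decay is to apply Dixmier--Malliavin for the action of the subgroup $X(\mathbf A)$ alone: write $f(z)=\sum_i\int_{X(\mathbf A)}\xi_i(x)f_i(zx)\,dx$ with $\xi_i\in C_c^\infty(X(\mathbf A))$. After inserting this into the right-hand side of \eqref{mix10} and using the $\Omega$-conditions to commute $h$ past $y$ and then past $yxy^{-1}$, the exchange $yx=[y,x]xy$ leaves the factor $\psi_C([x,y])$. The $dx$-integral $\int_{X(\mathbf A)}\xi_i(x)\psi_C^{-1}([x,y])\,dx=\hat\xi_i(y)$ is then a Schwartz function on $Y(\mathbf A)$ by the non-degeneracy of the pairing, and $\varphi(z)=\sum_i\int_{Y(\mathbf A)}\hat\xi_i(y)f_i(zy)\,dy$ is an honest, convergent element of $\mathcal A$. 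Your passing reference to ``the $X$-variable after its conjugation by $y$'' suggests you had this mechanism half in mind, but it is incompatible with the choice $\alpha_i\in C_c^\infty(H(\mathbf A))$: the Fourier-transform trick requires the convolution variable to live precisely in $X(\mathbf A)$, the partner of $Y(\mathbf A)$ under the pairing.
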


\begin{proof}
The first statement \eqref{mix10} is Lemma 7.1 in \cite{G-R-S2}. The proof of the second statement follows by similar arguments as in \cite{G-R-S2}, Corollary 7.2. Let $f\in \mathcal A$. By the lemma of Dixmier and Malliavin, there exist $f_1,...,f_r\in \mathcal A$, and $\xi_1,...,\xi_r\in C_c^\infty(X({\bf A}))$, such that for all $z\in H(\bf A)$,
\begin{equation}\label{mix12}
f(z)=\sum_{i=1}^r\int\limits_{X({\bf A})}\xi_i(x)f_i(zx)dx.
\end{equation}
Plug this into the right hand side of equation \eqref{mix10}. We obtain
\begin{equation}\label{mix13}
\sum_{i=1}^r\
\int\limits_{Y({\bf A})} 
\int\limits_{D(F)\backslash D({\bf A})}
\int\limits_{X({\bf A})}
\xi_i(x)f_i(uyhx)
\psi^{-1}_D(u)dxdudy.
\end{equation}
Since $h\in \Omega$, the first property defining $\Omega$ shows that, for $y\in Y(\bf A)$, there is $u'\in D(\bf A)$, such that $yh=u'hy$ and $\psi_D(u')=1$. Changing variable, $uu'\mapsto u$, we get that in the integral \eqref{mix13}, we can replace $yh$ by $hy$, and get
\begin{equation}\label{mix14}
\sum_{i=1}^r\ 
\int\limits_{Y({\bf A})} 
\int\limits_{D(F)\backslash D({\bf A})}
\int\limits_{X({\bf A})}
\xi_i(x)f_i(uhyx)
\psi^{-1}_D(u)dxdudy.
\end{equation}
By the second condition defining $\Omega$, we have in \eqref{mix14}, $hyx=[y,x]u'hy$, where $u'\in D(\bf A)$ is such that $\psi_D(u')=1$. Change variable $u[y,x]u'\mapsto u$, to get 
\begin{equation}\label{mix16}
\sum_{i=1}^r\ 
\int\limits_{D(F)\backslash D({\bf A})}
\int\limits_{Y({\bf A})}
\hat{\xi}_i(y)f_i(uhy)
\psi^{-1}_D(u)dydu,
\end{equation}
where
\begin{equation}\label{mix15}\notag
\hat{\xi}_i(y)=\int\limits_{X({\bf A})}\xi_i(x)\psi^{-1}_D([x,y])dy.
\end{equation}
Choosing 
\begin{equation}\label{mix17}\notag
\varphi(z)=\sum_{i=1}^r\int\limits_{Y({\bf A})}\hat{\xi}_i(y)f_i(zy)dy,\quad z\in H(\bf A),
\end{equation}
we get \eqref{mix11}. 

The proof of the third part is similar. Here, for $h\in \Omega'$, by the first property defining $\Omega'$, the r.h.s. of \eqref{mix10} is equal to
\begin{equation}\label{mix18}
\int\limits_{Y({\bf A})}
\int\limits_{D(F)\backslash D({\bf A})}
f(uhy)\psi^{-1}_D(u)dudy.
\end{equation}
Write $f$ as in \eqref{mix12}, and now \eqref{mix18} becomes
\begin{equation}\label{mix19}
\sum_{i=1}^r\
\int\limits_{Y({\bf A})} 
\int\limits_{D(F)\backslash D({\bf A})}
\int\limits_{X({\bf A})}
\xi_i(x)f_i(uhyx)
\psi^{-1}_D(u)dxdudy.
\end{equation}
By the third property defining $\Omega'$, \eqref{mix19} is equal to
\begin{equation}\label{mix20}
\sum_{i=1}^r\
\int\limits_{Y({\bf A})}
\int\limits_{X({\bf A})} 
\int\limits_{D(F)\backslash D({\bf A})}
\xi_i(x)f_i(uhxy)\psi^{-1}([x,y])
\psi^{-1}_D(u)dudxdy.
\end{equation}
Since $h$ normalizes $X({\bf A})$, changing variable $uhxh^{-1}\mapsto u$, we get that \eqref{mix20} is equal to \eqref{mix16}.
 
\end{proof}

\section{Example: the Jacquet - Langlands integral}\label{gen}
Let $\pi$ be an irreducible, automorphic, cuspidal representation of $GL_n({\bf A})$. We assume, for simplicity, that $\pi$ has a trivial central character. A doubling integral for the standard $L$-function of $\pi$ is given in \cite{PS-R1}, Section 3. Proposition 3.2 in \cite{PS-R1} relates the resulting doubling integral to the global integral of Godement-Jacquet \cite{G-J}. More precisely, by correctly choosing the section defining the Eisenstein series, this doubling integral represents $L(\pi,ns-\frac{1}{2}(n-1))$. This is the section denoted by $f^\phi(h;s)$ on p. 12 in \cite{PS-R1}. When we take it to be decomposable, its local components almost everywhere are not normalized; the product of their values at $1$, at all places $v$ outside an appropriate finite set $S$ containing the Archimedean places, is $L^S(4s)$. In this case, \eqref{dou3} is given by  
\begin{equation}\label{gen1}
\xi(\varphi_\pi,f_s)(g)=
\int\limits_{Z({\bf A})GL_n(F)\backslash GL_n({\bf A})}
\varphi_\pi(g)E(f_s)(t(g,h))dh.
\end{equation}
The Eisenstein series is defined on the group $GL_{n^2}({\bf A})$, and it is attached to the section $f_s$ of the induced representation $Ind_{P({\bf A})}^{GL_{n^2}({\bf A})}\delta_P^{s-\frac{1}{2}}$ (normalized induction). Here $P$ is the maximal parabolic subgroup of $GL_{n^2}$, whose Levi part is isomorphic to $GL_1\times GL_{n^2-1}$.
The embedding of $(g,h) \in GL_n({\bf A})\times GL_n({\bf A})$, $t(g,h)$, is given by the tensor product map. Also, $Z$ is the center of $GL_n$. 

In the rest of this section we assume that $n=2$. Then the doubling integral mentioned above, and motivating the definition of \eqref{gen1}, represents $\frac{L(\pi, 2s-\frac{1}{2})}{L(4s)}$. (Now we assume that the data in \eqref{gen1} are decomposable and the section $f_s$ is normalized at almost all places.) Recall the well known global integral of Jacquet and Langlands in \cite{J-L}, representing the standard $L$-function $L(\pi,s)$. This is the integral
\begin{equation}\label{gen2}
\int\limits_{F^*\backslash {\bf A}^*}\varphi_\pi\begin{pmatrix} a&\\ &1\end{pmatrix}|a|^{s-1/2}da^*.
\end{equation}
This integral unfolds to an integral which involves the $\psi$- Whittaker coefficient of $\varphi_\pi$. Hence, we will now prove that when computing the $\psi$- Whittaker coefficient of $\xi(\varphi_\pi,f_s)$, we obtain the integral \eqref{gen2}, with $s$ shifted, as inner integration. 

The $\psi$-Whittaker coefficient of $\xi(\varphi_\pi,f_s)$ is the integral
\begin{equation}\label{gen3}
\int\limits_{Z({\bf A})GL_2(F)\backslash GL_2({\bf A})}
\int\limits_{F\backslash {\bf A}}
\varphi_\pi(h)E(f_s)\left (\begin{pmatrix} I_2& xI_2\\ &I_2\end{pmatrix}\begin{pmatrix} h&\\ &h\end{pmatrix},s\right )\psi^{-1}(x)dxdh.
\end{equation}
We prove
\begin{theorem}\label{prop1}
For all $h\in GL_2(\bf A)$ the integral
\begin{equation}\label{gen31}
\int\limits_{F^*\backslash {\bf A}^*}
\varphi_\pi(\begin{pmatrix} a& \\ &1\end{pmatrix}h)|a|^{2s-1}d^*a
\end{equation}
is an inner integration to integral \eqref{gen3}.
\end{theorem}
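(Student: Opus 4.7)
For $\text{Re}(s)$ sufficiently large, my plan is to unfold the Eisenstein series in \eqref{gen3} by writing $E(f_s)(g) = \sum_{\gamma \in P(F)\backslash GL_4(F)} f_s(\gamma g)$ and decomposing the coset space $P(F)\backslash GL_4(F) \cong \mathbb{P}^3(F)$ into orbits under the right action of the subgroup $H(F) = \{u(x)\, t(h,h) : x\in F,\, h\in GL_2(F)\}$. Here the identification $P(F)\backslash GL_4(F) \cong \mathbb{P}^3(F)$ is via $Pg \mapsto [g^{-1}e_1]$, and the orbit stratification is governed by the incidence of the resulting line $\ell \subset F^4$ with the tensor-induced decomposition $F^4 = F^2 \oplus F^2$ (three orbits: $\ell \subset F^2\oplus 0$, $\ell \subset 0\oplus F^2$ up to Weyl conjugate, and the generic/open orbit).

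Cuspidality of $\pi$ should kill
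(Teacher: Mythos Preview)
Your approach is genuinely different from the paper's, and it is incomplete in a way that matters.

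The paper does \emph{not} unfold the Eisenstein series. Instead it Fourier--expands $E(f_s)$ along the unipotent radical $U$ of the $(2,2)$--parabolic of $GL_4$, and uses the \emph{smallness} of this degenerate Eisenstein series (its nonzero Fourier coefficients are bounded by the partition $(2,1^2)$) to discard all rank--two characters. After a short double--coset argument on the Levi $GL_2\times GL_2$ and a collapse of an $\alpha$--sum via the additive character, the integral \eqref{gen3} becomes
\[
\int_{T({\bf A})\backslash GL_2({\bf A})}
\left(\int_{F^*\backslash {\bf A}^*}\varphi_\pi\!\left(\begin{pmatrix}a&\\&1\end{pmatrix}h\right)|a|^{2s-1}\,d^*a\right)
E^{\psi_V}(f_s)\!\left(w_0\begin{pmatrix}h&\\&h\end{pmatrix}\right)dh.
\]
The key step is the identity $E^{\psi_V}(f_s)(\hat a\,m)=|a|^{2s-1}E^{\psi_V}(f_s)(m)$, proved again via smallness. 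This is exactly what makes \eqref{gen31} appear as an inner integration while an automorphic object (a Fourier coefficient of $E(f_s)$) still sits on the outside.

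Your three--orbit count on $\mathbb P^3$ under $\{u(x)\,\mathrm{diag}(h,h)\}$ is correct, but the vanishing mechanism you sketch is off. The orbit with $\ell\subset F^2\oplus 0$ has representative $\gamma=I$; there $u(x)\in P$ and the section is independent of $x$, so that term dies from $\int_{F\backslash{\bf A}}\psi^{-1}(x)\,dx=0$, not from cuspidality. (The orbit $\ell\subset 0\oplus F^2$ does die by cuspidality of $\pi$, once one checks that the section is left $N({\bf A})$--invariant there.)

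The real gap is the open orbit. Direct unfolding leaves you with an integral of $\varphi_\pi(h)\,f_s(\gamma_0 u(x)\,\mathrm{diag}(h,h))\,\psi^{-1}(x)$ over a quotient whose stabilizer is a diagonal copy of $N^-$ (times center). From this raw form it is not at all evident how to separate off $\int_{F^*\backslash{\bf A}^*}\varphi_\pi(\mathrm{diag}(a,1)h)\,|a|^{2s-1}\,d^*a$ as an inner integration; doing so would require exactly the torus equivariance that the paper obtains cleanly for $E^{\psi_V}(f_s)$, and which relies on the smallness of the Eisenstein series. Your proposal stops before addressing this, and without it you have computed \eqref{gen3} but not exhibited \eqref{gen31} inside it.
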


\begin{proof}

Let $U$ denote the unipotent radical of the standard parabolic subgroup of $GL_4$, whose Levi part is isomorphic to $GL_2\times GL_2$. Thus, $U({\bf A})$ consists of all matrices of the form
\begin{equation}\label{gen4}\notag
u(y)=\begin{pmatrix} I_2&y\\&I_2\end{pmatrix},
\ \ \ \ \ \ y\in Mat_2({\bf A}).
\end{equation}
Let $\psi_U$ be the character of $U(\bf A)$ defined by $\psi_U(u(y))=\psi(y_{2,1})$. Write the Fourier expansion of $E(f_s)$ along $U(F)\backslash U({\bf A})$. Each Fourier coefficient corresponds to a matrix $\gamma\in Mat_2(F)$, which defines the character $\psi_{U,\gamma}(u(y))=\psi(tr(\gamma y))$, and the corresponding Fourier coefficient is
$$
E^{\psi_{U,\gamma}}(f_s)(m)=\int\limits_{U(F)\backslash U({\bf A})}
E(f_s)(um)\psi^{-1}_{U,\gamma}(u)du.
$$
Note that $\psi_U=\psi_{U,\scriptsize{\begin{pmatrix}0&1\\0&0\end{pmatrix}}}$.\\
Denote $E^U(f_s)=E^{\psi_{U,0}}(f_s)$. This the constant term of $E(f_s)$ along $U(F)\backslash U({\bf A})$. If $\gamma$ is invertible, then $E^{\psi_{U,\gamma}}(f_s)=0$. This follows from the smallness of the representation generated by our Eisenstein series. In the language of unipotent orbits, when $\gamma$ is invertible, the character $\psi_{U,\gamma}$ corresponds to the partition $(2^2)$ of $4$. A similar proof to that of Prop. 5.2 in \cite{G} shows that if a Fourier coefficient corresponds to a unipotent orbit attached to a given partition $\underline{p}$ of $4$, then we must have $\underline{p}\leq (2,1^2)$. Now, our assertion follows since $(2^2)>(2,1^2)$. Thus, only $\gamma=0$ and $\gamma$ of rank one contribute to the Fourier expansion, and we have the identity
\begin{equation}\label{gen5}
E^U(f_s)=E^U(f_s)(m)+
\sum_{(\gamma_1,\gamma_2)\in B(F)\times B(F)\backslash  GL_2(F)\times GL_2(F)}\ 
\ \sum_{\alpha\in F^*}
E^{\psi_U}(f_s)\left ( j(\alpha)\begin{pmatrix} \gamma_1&\\ &\gamma_2\end{pmatrix}
m\right ).
\end{equation}
Here $j(\alpha)=\text{diag}(1,\alpha,1,1)$, and $B$ denotes the standard Borel subgroup of $GL_2$.  

Plug identity \eqref{gen5} into equation \eqref{gen3}. The constant term $E^U$ contributes zero to the expansion. Indeed, changing variables in $U$, we obtain as an inner integral $\int\psi^{-1}(x)dx$, along $F\backslash {\bf A}$, and this is zero.  As for the contribution of the second term in equation \eqref{gen5}, we first observe that the space of double cosets 
$B(F)\times B(F)\backslash  GL_2(F)\times GL_2(F)/GL_2^\Delta(F)$ 
contains two elements. Here $GL_2^\Delta$ is the diagonal embedding of $GL_2$. 
As representatives we can choose $e=(I_2,I_2)$ and $w_0=\text{diag}(J_2,I_2)$. The matrix $J_2$ was defined in the beginning of Section \ref{notation}. The contribution of $e$ is zero, again using the fact that we obtain $\int\psi(x)dx$ as an inner integration. As for $w_0$, we notice that the stabilizer of $w_0$ in $GL_2^\Delta(F)$ is $T^\Delta(F)$, where $T$ is the diagonal subgroup of $GL_2$. Hence, integral \eqref{gen3} is equal to
\begin{equation}\label{gen7}
\int\limits_{Z({\bf A})T(F)\backslash GL_2({\bf A})}
\sum_{\alpha\in F^*}\ 
\int\limits_{F\backslash {\bf A}}
\varphi_\pi(h)E^{\psi_U}(f_s)\left (j(\alpha)w_0 \begin{pmatrix} I_2& xI_2\\ &I_2\end{pmatrix}\begin{pmatrix} h&\\ &h\end{pmatrix}\right )\psi^{-1}(x)dxdh.
\end{equation}
Note that
$$
j(\alpha)w_0 \begin{pmatrix} I_2& xI_2\\ &I_2\end{pmatrix}=u\left(\begin{pmatrix}0&x\\ \alpha x&0\end{pmatrix}\right) j(\alpha)w_0;\quad \psi_U(u\left(\begin{pmatrix}0&x\\ \alpha x&0\end{pmatrix}\right))=\psi(\alpha x).
$$
Therefore conjugating in the integrand of \eqref{gen7} the unipotent matrix to the left, we obtain the integral $\int\psi^{-1}((1-\alpha)x)dx$ as inner integration. Thus, for all $\alpha\ne 1$ we get zero contribution. Factoring the measure, integral \eqref{gen7} is equal to
\begin{equation}\label{gen71}\notag
\int\limits_{T({\bf A})\backslash GL_2({\bf A})}
\int\limits_{F^*\backslash {\bf A}^*}
\varphi_\pi\left (\begin{pmatrix} a& \\ &1\end{pmatrix}h\right )
E^{\psi_U}(f_s)\left (\hat{a}w_0 \begin{pmatrix} h&\\ &h\end{pmatrix}\right )d^*adh.
\end{equation}
Here $\hat{a}=\text{diag}(1,a,a,1)$. We claim that $E^{\psi_U}(f_s)(\hat{a}m)=
|a|^{2s-1}E^{\psi_U}(f_s)(m)$. To prove this identity, let $V$ denote the maximal unipotent subgroup of $GL_4$, which contains $U$. Let $\psi_V$ denote the character of $V({\bf A})$ obtained by the trivial extension 
of $\psi_U$ from $U({\bf A})$ to $V({\bf A})$. Then 
$$
E^{\psi_U}(f_s)(m)=E^{\psi_V}(f_s)(m)=\int\limits_{V(F)\backslash V({\bf A})}
E(f_s)(vm)\psi^{-1}_V(v)dv. 
$$
This follows from the smallness of our Eisenstein series. To show this, put
$$
v(x,y)=\text{diag}(\begin{pmatrix}1&x\\&1\end{pmatrix},\begin{pmatrix}1&y\\&1\end{pmatrix}).
$$
Consider the following function on $F\backslash {\bf A} \times F\backslash {\bf A}$,
$(x,y)\mapsto E^{\psi_U}(f_s)(v(x,y)m)$ and its Fourier expansion. Its Fourier coefficients have the form
$$
\int\limits_{(F\backslash {\bf A})^2}E^{\psi_U}(f_s)(v(x,y)m)\psi^{-1}(\alpha x+\beta y)dxdy,
$$
where $\alpha, \beta\in F$. As before, if $(\alpha,\beta)\neq (0,0)$, this Fourier coefficient is zero. Only $\alpha=\beta=0$ contribute to the Fourier expansion. Unfolding the Eisenstein series, it is easy to prove that $E^{\psi_V}(f_s)(\hat{a}m)=|a|^{2s-1}E^{\psi_V}(f_s)(m)$.
From this the claim follows.

We conclude that integral \eqref{gen3} is equal to
\begin{equation}\label{gen8}
\int\limits_{T({\bf A})\backslash GL_2({\bf A})}
\left (\int\limits_{F^*\backslash {\bf A}^*}
\varphi_\pi\left (\begin{pmatrix} a& \\ &1\end{pmatrix}h\right )|a|^{2s-1}d^*a\right )
E^{\psi_V}(f_s)\left (w_0 \begin{pmatrix} h&\\ &h\end{pmatrix}\right )dh.
\end{equation}
From this the theorem follows.

\end{proof}
We remark that in \eqref{gen8} the Fourier coefficient $E^{\psi_V}(f_s)$ is Eulerian. Unfolding the Eisenstein series, we get, for $Re(s)$ sufficiently large,
\begin{equation}\label{gen9}
E^{\psi_V}(f_s)\left (w_0 \begin{pmatrix} h&\\ &h\end{pmatrix}\right )=\int\limits_{{\bf A}^2}f_s\left(\begin{pmatrix}1\\x&1\\y&0&1\\0&0&0&1\end{pmatrix}w^0\begin{pmatrix}h\\&h\end{pmatrix}\right)\psi^{-1}(y)dxdy,
\end{equation}
where $w^0=\text{diag}(J_3,1)$. Assume that $f_s$ is decomposable. Let $S$ be a finite set of places, containing the Archimedean places, outside which $f_s$ is spherical and normalized. Assume that $\varphi_\pi$ is right $GL_2(\mathcal{O}_v)$, for all $v\notin S$. Thus, for such places, it is enough to take in the local integration over $T(F_v)\backslash GL_2(F_v)$ in \eqref{gen8}, $h=\begin{pmatrix}1&z\\&1\end{pmatrix}$. Denote the local factor of $f_s$, at $v\notin S$, by $f^0_{s,v}$. Assume also that, for such $v$, $\psi_v$ is normalized. Then it is not hard to see that the following local integral of \eqref{gen9}, at $v$,
\begin{equation}\label{gen10}
\int\limits_{ F_v^2}f^0_{s,v}\left(\begin{pmatrix}1\\x&1\\y&0&1\\0&0&0&1\end{pmatrix}w^0\begin{pmatrix}1&z\\&1\\&&1&z\\&&&1\end{pmatrix}\right)\psi_v^{-1}(y)dxdy,
\end{equation}
is supported in $z\in \mathcal{O}_v$, and the integration is supported in $x\in \mathcal{O}_v$, so that for such $z$, \eqref{gen10} is equal to
\begin{equation}\label{gen11}
\int\limits_{ F_v}f^0_{s,v}\left(\begin{pmatrix}1\\0&1\\y&0&1\\0&0&0&1\end{pmatrix}\right)\psi_v^{-1}(y)dy.
\end{equation}
It is easy to compute \eqref{gen11}. It is equal to $1-q_v^{-4s}$, where $q_v$ is the number of elements in the residue field of $F_v$. Thus, the product of local integrals \eqref{gen11}, over all $v\notin S$ (and $\text{Re}(s)>1$) is $(L^S(4s))^{-1}$.

\section{Examples of new way type integrals}\label{new}
In this section we consider two examples. The first example is the famous construction of global integrals given by Piatetski-Shapiro and Rallis in \cite{PS-R2}. This was the first example of the the so-called New Way type integrals (after the title of the paper \cite{PS-R2}). These integrals represent the standard  $L$-function $L(\pi,s)$ of an irreducible, automorphic, cuspidal representation $\pi$ of $Sp_{2k}({\bf A})$.
In the second example  we extend the above construction and introduce a (new) New Way type of integrals which represent the standard $L$-function $L(\pi\times\tau,s)$. Here $\tau$ is an irreducible, automorphic, cuspidal representation of $GL_n({\bf A})$. We will do it by considering the case $k=n=2$. This will demonstrate how can one use the
procedure described in the introduction to actually construct new global integrals which we conjecture to be Eulerian.

\subsection{The New Way Construction of Piatetski-Shapiro and Rallis}\label{psr1}
First, we describe the functional used in the integrals in \cite{PS-R2}.
Let $\pi$ denote an irreducible, automorphic, cuspidal representation of $Sp_{2k}({\bf A})$. Let
$T_0$ denote a symmetric matrix in $GL_k(F)$. It is not essential, but we may assume that $T_0$ is diagonal.  Denote $T=J_kT_0$. With these notations, the functional used in the New Way integral is given by
\begin{equation}\label{new1}
\varepsilon_T(\varphi_\pi)=\int\limits_{Mat_k^0(F)\backslash Mat_k^0({\bf A})}\varphi_\pi
\left (\begin{pmatrix} I_k&Z\\ &I_k\end{pmatrix} \right )\psi^{-1}(\text{tr}(TZ))dZ
\end{equation}
To describe the global integral, we assume that $k$ is even. This assumption is also made in \cite{PS-R2} at the beginning of Section 2. It is made to avoid the use of Eisenstein series on metaplectic groups. Denote by $\chi_T$ the quadratic character of ${\bf A}^*$,  $\chi_T(x)=(x,\text{det}(T))$. Here, $(,)$ is the global Hilbert symbol. This is the quadratic character corresponding to $\text{disc}(T_0)$.

Let $SO_{T_0}$ denote the special orthogonal group in $k$ variables over $F$, corresponding to $T_0$. Consider the dual pair $SO_{T_0}\times Sp_{2k}$ inside $Sp_{2k^2}$. Since $k$ is even, $SO_{T_0}({\bf A})\times Sp_{2k}(\bf A)$ splits in $Sp^{(2)}_{2k^2}(\bf A)$. Denote by $\omega_{\psi_T}$ the restriction of the Weil representation $\omega_\psi$ of $Sp^{(2)}_{2k^2}(\bf A)$, corresponding to $\psi$, to the image of $Sp_{2k}(\bf A)$ under the splitting. It can be realized in $\mathcal{S}(Mat_k({\bf A}))$, such that for $\phi\in \mathcal{S}(Mat_k({\bf A}))$, $g\in Sp_{2k}(\bf A)$,
$$
\omega_{\psi_T}\left (\begin{pmatrix} I_k&Z\\ &I_k\end{pmatrix} g\right )\phi(I_k)=\psi(\text{tr}(TZ))\omega_{\psi_T}(g)(I_k).
$$
See \cite{PS-R2}, p. 117. Denote by  $\theta_{\psi_T}^\phi$ the corresponding theta series, restricted to $Sp_{2k}(\bf A)$.
More generally, we have, for $\phi\in \mathcal{S}(Mat_k({\bf A}))$, the theta series $\theta^\phi_{\psi,2k^2}$ on $Sp^{(2)}_{2k^2}(\bf A)$ corresponding to $\phi$. Denote by $i_T$ the above splitting of $SO_{T_0}({\bf A})\times Sp_{2k}(\bf A)$. Then, for $(m,g)\in SO_{T_0}({\bf A})\times Sp_{2k}(\bf A)$,  $\phi\in \mathcal{S}(Mat_k({\bf A}))$,
$$
\theta^\phi_{\psi,2k^2}(i_T(m,g))=\sum_{x\in Mat_k(F)}\omega_{\psi_T}(g)\phi(m^{-1}x).
$$ 
We should denote $i_{T,2k}$, but we will drop $2k$, as it will be clear from the context. We have, for $g\in Sp_{2k}({\bf A})$, $\theta_{\psi_T}^\phi(g)=\theta^\phi_{\psi,2k^2}(i_T(1,g))$. 

Let $E(\eta_{\chi_T,s})(g)$ denote an Eisenstein series on $Sp_{2k}({\bf A})$, attached to a smooth, holomorphic section $\eta_{\chi_T,s}$ of $Ind_{Q_k({\bf A})}^{Sp_{2k}({\bf A})}(\chi_T\circ \text{det})|\text{det}\cdot|^s$. 
Then the global integral introduced in \cite{PS-R2}, equation (2.1), is given by
\begin{equation}\label{new2}
\int\limits_{Sp_{2k}(F)\backslash Sp_{2k}({\bf A})}\varphi_\pi(g)
\theta_{\psi_T}^\phi(g)E(\eta_{\chi_T,s})(g)dg.
\end{equation}
Let $S$ be a finite set of places of $F$, containing the Archimedean places, such that $\pi_v$ is unramified for $v\notin S$, and the diagonal coordinates of $T_0$ are units outside $S$. Then the integral \eqref{new2} represents outside $S$ the ratio 
\begin{equation}\label{new2.1}
\frac{L^S(\pi,s+\frac{1}{2})}{d^S_k(\chi_T,s)},
\end{equation}
where 
\begin{equation}\label{new2.2}
d^S_k(\chi_T,s)=L^S(\chi_T,s+\frac{k+1}{2})\prod_{i=0}^{\frac{k}{2}-1}L^S(2s+2i+1).
\end{equation}

In \cite{PS-R1}, the authors construct a doubling integral, which represents outside $S$ the ratio 
\begin{equation}\label{new2.3}
\frac{L^S(\pi,s+\frac{1}{2})}{d^S_{2k}(1,s)},
\end{equation}
where $d^S_{2k}(1,s)$ is obtained from \eqref{new2.2}, with $2k$ instead of $k$ and the trivial character instead of $\chi_T$. This construction uses an Eisenstein series ${\mathcal E}(f_s)$ on $Sp_{4k}({\bf A})$, corresponding to a smooth, holomorphic section $f_s$ of 
$Ind_{Q_{2k}({\bf A})}^{Sp_{4k}({\bf A})}|\text{det}\cdot|^s$. 
As explained in Section \ref{not3}, for the case $n=1$, the function
\begin{equation}\label{new3}
\xi(\varphi_\pi,f_s)(g)=\int\limits_{Sp_{2k}(F)\backslash Sp_{2k}({\bf A})}\varphi_\pi(h){\mathcal E}(f_s)\left (\begin{pmatrix} g_1&&g_2\\ &h&\\ g_3&&g_4\end{pmatrix}\right )dh
\end{equation}
is a cusp form in the space of $\pi^\iota$. Here $g=\begin{pmatrix} g_1&g_2\\ g_3&g_4\end{pmatrix}\in Sp_{2k}(\bf A)$. 

Our goal in this example is to  compute integral \eqref{new1} with $(\varphi_\pi)^\iota$ instead of $\varphi_\pi$, and then replace $(\varphi_\pi)^\iota$  by $\xi(\varphi_\pi,f_s)$.  In other words, we compute the integral 
\begin{equation}\label{new4}
\int\limits_{Sp_{2k}(F)\backslash Sp_{2k}({\bf A})}
\int\limits_{Mat_k^0(F)\backslash Mat_k^0({\bf A})}
\varphi_\pi(h){\mathcal E}(f_s)\left (\begin{pmatrix} I_k&&Z\\ &h& \\ &&I_k\end{pmatrix}\right )\psi^{-1}(\text{tr}(TZ))dZdh.
\end{equation}
We prove
\begin{theorem}\label{prop2}
Given $\phi\in \mathcal{S}(Mat_k({\bf A}))$, there are nontrivial choices of the sections $f_s$, $\eta_{\chi_T,s}$, such that integral \eqref{new4} is equal to integral \eqref{new2}, for any cusp form $\varphi_\pi$ in the space of $\pi$. The choice of sections is made explicit in the proof, and is such that $f_s$ is a certain convolution of any given section $f'_s$ by a Schwartz function depending on $\phi$ and another given Schwartz function $\phi_2\in \mathcal{S}(Mat_k({\bf A}))$; $\eta_{\chi_T,s}$ is given by an explicit (integral) formula $\eta_{\chi_T,s}=\eta(f'_s,\phi_2)$.
\end{theorem}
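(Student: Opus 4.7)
The plan is to unfold the Eisenstein series $\mathcal{E}(f_s)$ in \eqref{new4} and to analyze the resulting double coset decomposition, exploiting the non-degeneracy of the character $\psi^{-1}(\text{tr}(TZ))$ to reduce to a single ``open orbit'' whose contribution I will identify with the target integral \eqref{new2}. Write $\iota(h,Z) = \text{diag}(I_k,h,I_k)\cdot u(Z)$, where $u(Z)\in U_{2k}(\mathbf{A})$ has $Z$ in the upper-right $k\times k$ slot of the Siegel unipotent radical; then the inner $Z$-integration in \eqref{new4} is a Fourier coefficient of $\mathcal{E}(f_s)$ along a subgroup of that radical. Since $\mathcal{E}(f_s)$ is induced from the ``small'' representation $|\det|^s$ of $Q_{2k}(\mathbf{A})$, this Fourier coefficient is expected to have a rigid structure, expressible via a theta kernel and a smaller Eisenstein series.

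For $\text{Re}(s)$ large, I would write $\mathcal{E}(f_s)(x) = \sum_{\gamma\in Q_{2k}(F)\backslash Sp_{4k}(F)} f_s(\gamma x)$ and decompose $Q_{2k}(F)\backslash Sp_{4k}(F)$ into orbits under the right action of $Sp_{2k}(F)\ltimes Mat_k^0(F)$ via $\iota$. A Bruhat-type orbit analysis, parallel to that of the classical doubling integral in \cite{PS-R1}, produces finitely many orbits, each with an explicit representative. For every non-open orbit, the stabilizer of the representative inside $Mat_k^0(F)$ contains a non-trivial subspace on which $Z\mapsto \psi^{-1}(\text{tr}(TZ))$ is non-trivial; here I crucially use that $T = J_k T_0$ is non-degenerate. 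A standard unipotent-integration argument then forces the contribution of these orbits to vanish.

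For the open orbit, represented by a suitable Weyl element $\gamma_0$ exchanging the two maximal isotropic subspaces involved in the doubling, the inner $Z$-integration becomes a Fourier transform of $f_s$ along $Mat_k^0(\mathbf{A})$, producing a Schwartz function on $Mat_k(\mathbf{A})$. After identifying the stabilizer in $Sp_{2k}(F)$ and folding back the remaining $Sp_{2k}(F)$-sum through the Bruhat decomposition, the residual coset sum reassembles into an Eisenstein series on $Sp_{2k}(\mathbf{A})$ induced from $Q_k(\mathbf{A})$. Using the splitting $i_T$ of $SO_{T_0}(\mathbf{A})\times Sp_{2k}(\mathbf{A})$ inside $Sp_{2k^2}^{(2)}(\mathbf{A})$ (available because $k$ is even), the Schwartz-function piece is recognized, via the Schr\"odinger model of the Weil representation, as the theta series $\theta^\phi_{\psi_T}(h)$ for the dual pair $(O_{T_0}, Sp_{2k})$, and the $\chi_T$-twist appears because $i_T$ deviates from the naive splitting on the Siegel Levi $GL_k\subset Sp_{2k}$ precisely by $\chi_T$. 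To match the parameters in the theorem, I would start from free data $f'_s$ and $\phi_2\in \mathcal{S}(Mat_k(\mathbf{A}))$, define $\eta_{\chi_T,s} = \eta(f'_s,\phi_2)$ by the explicit adelic integral dictated by the open-orbit reassembly, and then define $f_s$ as the convolution of $f'_s$ with an explicit Schwartz function on $U_{2k}(\mathbf{A})$ depending on $\phi$ and $\phi_2$, engineered so that its Fourier transform along $Mat_k^0(\mathbf{A})$ reproduces the given $\phi$.

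The main obstacle is the careful bookkeeping on the open orbit: one must track the Fourier transforms in the Schr\"odinger realization of the Weil representation and verify that the cocycle corrections produced by $i_T$ accumulate to exactly $\chi_T$ on the Siegel Levi of $Sp_{2k}$. Conceptually this is a seesaw-type identity relating the doubling pair $(Sp_{2k}\times Sp_{2k}, Sp_{4k})$ with the theta correspondence for $(Sp_{2k}, O_{T_0})$ inside $Sp_{2k^2}$; once the seesaw is set up correctly the remainder is a finite, explicit computation, but that computation is the substantive content of the theorem.
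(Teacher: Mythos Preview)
Your approach is genuinely different from the paper's, and the difference is worth spelling out.

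You propose to unfold the Eisenstein series $\mathcal{E}(f_s)$ on $Sp_{4k}(\mathbf{A})$ and analyze the double cosets $Q_{2k}(F)\backslash Sp_{4k}(F)/(Sp_{2k}(F)\times Mat_k^0(F))$, killing the non-open orbits by the non-degeneracy of $T$ and reassembling the open orbit contribution into $\theta_{\psi_T}^\phi\cdot E(\eta_{\chi_T,s})$. The paper does \emph{not} unfold $\mathcal{E}(f_s)$. Instead it observes that the $Z$-integration in \eqref{new4} is the Fourier coefficient $\mathcal{E}^{\psi_T}(f_s)$ along the \emph{center} of the generalized Heisenberg group $U=U_{k,4k}$, and then invokes Ikeda's theorem on Fourier-Jacobi coefficients: for a section of the special form $f_s^{\phi_1,\phi_2}$ (an explicit convolution of a given section by a Schwartz function built from $\phi_1,\phi_2$), the coefficient $\mathcal{E}^{\psi_T}(f_s^{\phi_1,\phi_2})$ factors as $\theta^{\phi_1}_{\psi,2k^2}$ times the full Fourier--Jacobi period $\int_{U(F)\backslash U(\mathbf{A})}\overline{\theta^{\phi_2}_{\psi,2k^2}}\,\mathcal{E}(f_s)\,du$. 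A second theorem of Ikeda identifies this last $du$-integral directly as an Eisenstein series $E(\eta(f_s,\phi_2))$ on $Sp_{2k}(\mathbf{A})$ attached to $\mathrm{Ind}_{Q_k}^{Sp_{2k}}(\chi_T\circ\det)|\det|^s$. Thus the paper never touches the coset decomposition at all; the passage from \eqref{new4} to \eqref{new2} is two citations to Ikeda plus the construction of $f_s^{\phi_1,\phi_2}$.

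Your route is not wrong in spirit, but the step you label ``reassembly'' is exactly where the content lies, and your sketch does not explain it. After unfolding, the open-orbit term is a single sum over a coset space in $Sp_{2k}(F)$ together with an adelic $Z$-integral; the target \eqref{new2} is a \emph{product} of two independent $F$-sums, one over $Mat_k(F)$ (the theta kernel) and one over $Q_k(F)\backslash Sp_{2k}(F)$ (the small Eisenstein series). You assert that the Schwartz piece ``is recognized \ldots\ as the theta series'', but you have not produced the lattice sum over $Mat_k(F)$ anywhere---it does not come from the $Z$-Fourier transform, which lives on $Mat_k^0$, and it does not come from the $Sp_{2k}(F)$-coset sum. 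Producing that factorization is precisely Ikeda's theta expansion of the central Fourier coefficient, so in effect you would be re-proving the two Ikeda results the paper cites. That is doable, but it is the theorem, not a bookkeeping detail.
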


\begin{proof}

Recall that we assume that $k$ is even. Consider the the unipotent radical $U_{k,4k}$, which we denote for short by $U$. It consists of all matrices of the form
\begin{equation}\label{new5}
u=\begin{pmatrix} I_k&a&b&Z\\ &I_k&&\star\\ &&I_k&\star\\ &&&I_k\end{pmatrix}\in Sp_{4k}.
\end{equation}
This group is a generalized Heisenberg group, as follows. Consider the Heisenberg group ${\mathcal H}_{2k^2+1}$ realized as $Mat_k\times Mat_k\times Mat_1$. Then there is a  homomorphism from $U$ onto ${\mathcal H}_{2k^2+1}$. We choose the following homomorphism,
$l_T(u)=(a,b,\text{tr}(TZ))$. 
Embed $SO_{T_0}\times Sp_{2k}$ inside $Sp_{4k}$  by $(m,h)\to \text{diag}(m,h,m^*)\in Sp_{4k}$.
Note that the action by conjugation of $\text{diag}(m,h,m^*)$ on the element \eqref{new5} takes $(a,b,Z)$ to $((m^{-1}a,m^{-1}b)h,m^{-1}ZJ_k{}^tm^{-1}J_k)$. Denote, for short, $(m,h)=\text{diag}(m,h,m^*)$. Let $D$ denote the semi-direct product of $U$ and the image of $SO_{T_0}\times Sp_{2k}$ inside $Sp_{4k}$.  Consider, for $d\in D(\bf A)$, the Fourier coefficient 
\begin{equation}\label{new6}
{\mathcal E}^{\psi_T}(f_s)(d)=\int\limits_{Mat_k^0(F)\backslash Mat_k^0({\bf A})}
{\mathcal E}(f_s)\left (\begin{pmatrix} I_k&&Z\\ &I_{2k}& \\ &&I_k\end{pmatrix}d\right )\psi^{-1}(\text{tr}(TZ))dZ.
\end{equation}
It defines a function of  $d\in D(F)\backslash D({\bf A})$. Now we use a theorem of Ikeda on Fourier-Jacobi coefficients. See \cite{I}, Sec. 1. It says that the following set of functions of $d=v(m,h)$, $v\in U({\bf A})$, $m\in SO_{T_0}(\bf A)$, $h\in Sp_{2k}(\bf A)$, 
\begin{equation}\label{new7.0}
\theta_{\psi,2k^2}^{\phi_1}(l_T(v)i_T(m,h))
\int\limits_{U(F)\backslash U({\bf A})}
\overline{\theta_{\psi,2k^2}^{\phi_2}(l_T(u)i_T(m,h))}{\mathcal E}(f_s)(u(m,h))du
\end{equation}
spans a dense subspace of the space of functions given by integrals \eqref{new6}. In more details, Ikeda introduced a family of functions $\varphi_{\phi_1,\phi_2}$ on ${\mathcal H}_{2k^2+1}(\bf A)$, which transform by $\psi^{-1}$ under translations by the center, and such that $(x,y)\mapsto \varphi_{\phi_1,\phi_2}(x,y,0)$ lies in $\mathcal{S}(Mat_k({\bf A})\times Mat_k({\bf A}))$. Here $\phi_1,\phi_2\in \mathcal{S}(Mat_k({\bf A})$. See \cite{I}, p. 621. Let, for $v\in U(\bf A)$, $m\in SO_{T_0}(\bf A)$, $h\in Sp_{2k}(\bf A)$,
\begin{equation}\label{new7.1}
\rho(\varphi_{\phi_1,\phi_2}){\mathcal E}^{\psi_T}(f_s)(v(m,h))=
\int\limits_{C({\bf A})\backslash U({\bf A})}\varphi_{\phi_1,\phi_2}(l_T(u)){\mathcal E}^{\psi_T}(f_s)(v(m,h)u)du,
\end{equation}
where $C$ is the center of $U$. Then the functions \eqref{new7.1} span a dense subspace of the closure in the Frechet topology of the functions $v(m,h)\mapsto {\mathcal E}^{\psi_T}(f_s)(v(m,h))$. Here $s$ is fixed away from the set of poles of our Eisenstein series, and we let the section vary. The proof of Prop. 1.3 in \cite{I} shows that the r.h.s. of \eqref{new7.1} is equal to \eqref{new7.0}. Note that \eqref{new4} can be rewritten as
\begin{equation}\label{new4.1}
\int\limits_{Sp_{2k}(F)\backslash Sp_{2k}({\bf A})}
\varphi_\pi(h){\mathcal E}^{\psi_T}(f_s) ((1,h))dh.
\end{equation}
Let us realize $C({\bf A})\backslash U({\bf A})$ as the subset which is the product of subgroups $U_1(\bf A)$, $U_2(\bf A)$ where $U_1$ (resp. $U_2$) is the subgroup of elements $u$ of the form \eqref{new5}, such that $Z=0$ and $b=0$ (resp. $a=0$). Then 
\begin{equation}\label{new7.2}
f^{\phi_1,\phi_2}_s=\int\limits_{U_2({\bf A})}\int\limits_{U_1({\bf A})}\varphi_{\phi_1,\phi_2}(l_T(u_2)l_T(u_1))\rho(u_2)\rho(u_1)f_sdu_1du_2
\end{equation}
is a smooth, holomorphic section of $Ind_{Q_{2k}({\bf A})}^{Sp_{4k}({\bf A})}|\text{det}\cdot|^s$; $\rho$ denotes a right translation. Now \eqref{new7.1} reads as 
\begin{equation}\label{new7.3}
\rho(\varphi_{\phi_1,\phi_2}){\mathcal E}^{\psi_T}(f_s)(v(m,h))=
{\mathcal E}^{\psi_T}(f^{\phi_1,\phi_2}_s)(v(m,h)).
\end{equation}

Thus, given the section $f_s$ and $\phi_1,\phi_2\in \mathcal{S}(Mat_k({\bf A}))$, we construct the section \eqref{new7.2}, and substitute in the integral \eqref{new4} (or in \eqref{new4.1}) $f^{\phi_1,\phi_2}_s$ instead of $f_s$. By \eqref{new7.3} and what we explained before, we get
\begin{equation}\label{new8}
\int\limits_{Sp_{2k}(F)\backslash Sp_{2k}({\bf A})}
\varphi_\pi(h)\theta_{\psi,2k^2}^{\phi_1}(i_T(1,h))
\int\limits_{U(F)\backslash U({\bf A})}
\overline{\theta_{\psi,2k^2}^{\phi_2}(l_T(u)i_T(1,h))}{\mathcal E}(f_s)(u(1,h))dudh.
\end{equation}
Note that $\theta_{\psi,2k^2}^{\phi_1}(i_T(1,h))=\theta_T^{\phi_1}(h)$. From \cite{I}, Theorem 3.2, the inner $du$-integral in \eqref{new8} is an Eisenstein series $E(\eta(f_s,\phi_2))(h)$ on $Sp_{2k}(\bf A)$, corresponding to an explicitly written section $\eta(f_s,\phi_2)$ of $Ind_{Q_k({\bf A})}^{Sp_{2k}({\bf A})}(\chi_T\circ \text{det})|\text{det}\cdot|^s$. Thus \eqref{new8} is equal to 
\begin{equation}\label{new9}
\int\limits_{Sp_{2k}(F)\backslash Sp_{2k}({\bf A})}
\varphi_\pi(h)\theta_T^{\phi_1}(h)E(\eta(f_s,\phi_2))(h)dh,
\end{equation}
which is an integral of the form \eqref{new2}. This completes the proof of the theorem.

\end{proof}

Let us examine in detail the equality of the last theorem, using the choice of data made in the proof. Assume that $\varphi_\pi$ corresponds to a decomposable vector, which is unramified outside $S$. Similarly, assume that $f_s=\prod_v f_{v,s}$ is a product of local sections, which are unramified and normalized outside $S$. Assume also that the Schwartz functions $\phi_1$, $\phi_2$ in the proof are decomposable $\phi_i=\prod_v\phi_{i,v}$, $i=1,2$, where, for $v\notin S$, $\phi_{i,v}=\phi^0$ -the characteristic function of $Mat_k(\mathcal{O}_v)$. It is easy to check that $f^{\phi_1,\phi_2}_s$ is decomposable as the product of the analogous local sections $f^{\phi_{1,v},\phi_{2,v}}_{v,s}$ , and that for $v\notin S$ the corresponding factor is the normalized unramified section. Using the notation of \eqref{dou6.6}, the integral \eqref{new4}, with $f_s$ replaced by $f^{\phi_1,\phi_2}_s$, is equal to
\begin{equation}\label{new9.1}
\frac{L^S(\pi\times\tau,s+\frac{1}{2})}{d_{2k}^S(1,s)}
\varepsilon_T((\iota\circ \ell)(\otimes_{v\in S}\xi(\varphi_{\pi_v},f^{\phi_{1,v},\phi_{2,v}}_{v,s})\otimes \varphi_\pi^S)).
\end{equation}
This is a meromorphic function. Denote $\xi_S(\varphi_\pi,f^{\phi_1,\phi_2}_s)=\otimes_{v\in S}\xi(\varphi_{\pi_v},f^{\phi_{1,v},\phi_{2,v}}_{v,s})$. We proved that \eqref{new9} is equal to \eqref{new9.1}, that is 
\begin{multline}\label{new9.2}
\int\limits_{Sp_{2k}(F)\backslash Sp_{2k}({\bf A})}
\varphi_\pi(h)\theta_T^{\phi_1}(h)E(\eta(f_s,\phi_2))(h)dh=\\
\frac{L^S(\pi\times\tau,s+\frac{1}{2})}{d_{2k}^S(1,s)}
\varepsilon_T((\iota\circ \ell)(\xi_S(\varphi_\pi,f^{\phi_1,\phi_2}_s)\otimes \varphi_\pi^S)).
\end{multline}
Examining the section $\eta(f_s,\phi_2)$, we see that it is decomposable and has the form 
$$
\frac{d^S_k(\chi_T,s)}{d_{2k}^S(1,s)}(\otimes_{v\in S}\eta_v(f_{v,s},\phi_{2,v})\otimes \eta^S_{\chi_T,s}),
$$
where $\eta_v(f_{v,s},\phi_{2,v})$ is the local section of $Ind_{Q_k(F_v)}^{Sp_{2k}(F_v)}(\chi_{T,v}\circ \text{det})|\text{det}\cdot|^s$, which is the local analog at $v$ of $\eta(f_s,\phi_2)$ defined in \cite{I}, Theorem 3.2; $\eta^S_{\chi_T,s}=\otimes_{v\notin S}\eta^0_{\chi_{T,v},s}$, where, for $v\notin S$, $\eta^0_{\chi_{T,v},s}$ is the normalized, spherical section of $Ind_{Q_k(F_v)}^{Sp_{2k}(F_v)}(\chi_{T,v}\circ \text{det})|\text{det}\cdot|^s$. Denote $\eta_S(f_s,\phi_2)=\otimes_{v\in S}\eta_v(f_{v,s},\phi_{2,v})$. Then \eqref{new9.2} can be rewritten as
\begin{multline}\label{new9.3}
\int\limits_{Sp_{2k}(F)\backslash Sp_{2k}({\bf A})}
\varphi_\pi(h)\theta_T^{\phi_1}(h)E(\eta_S(f_s,\phi_2)\otimes \eta^S_{\chi_T,s})(h)dh=\\
\frac{L^S(\pi\times\tau,s+\frac{1}{2})}{d_{2k}^S(1,s)}
\varepsilon_T((\iota\circ \ell)(\xi_S(\varphi_\pi,f^{\phi_1,\phi_2}_s)\otimes \varphi_\pi^S)).
\end{multline} 
 This explains the work in \cite{PS-R2}, namely the theorem that the global integral \eqref{new2} represents \eqref{new2.1}.

\subsection{Extending the Construction of Piatetski-Shapiro and Rallis}\label{psr2}
We keep the notations of the previous sub-section. In this section we take $k=2$. Thus, $\pi$ is an irreducible, automorphic, cuspidal representation of $Sp_4({\bf A})$. Let $\tau$ denote an irreducible, automorphic, cuspidal representation of $GL_2({\bf A})$. Our goal is to introduce a new global integral which will unfold to an integral involving the Fourier coefficient given by integral \eqref{new1}. We  then conjecture that this integral represents the standard $L$-function $L(\pi\times\tau,s)$. This is an example showing how to use our procedure to obtain new global integrals representing an $L$-function coming from doubling integrals.

First we need some preliminaries. Consider the unipotent radical $U_{2^2,16}$. It consists of all matrices of the form
\begin{equation}\label{new50}
\begin{pmatrix} I_2&a_1&a_2&a_3&a_4\\ &I_2&y&Z&a_3'\\ &&I_8&y'&a_2'
\\&&&I_2&a_1'\\ &&&&I_2\end{pmatrix}\in Sp_{16}.
\end{equation}
Let $U^0_{2^2,16}$ denote the subgroup of $U_{2^2,16}$ consisting of all matrices of the form \eqref{new50}, such that $y=\begin{pmatrix} 0_{2\times 6},&y_0
\end{pmatrix}$, $y_0\in Mat_2$. Notice that $U_{2^2,16}$ has a structure of a generalized Heisenberg group. Recall the matrix $T=J_2T_0$ from the previous sub-section. Write $T_0=diag(t_1,t_2)$. Define $l_T^0 :  U_{2^2,16}\to {\mathcal H}_{17}$ by 
$l_T^0(u)=(y,\text{tr}(TZ))\in {\mathcal H}_{17}$, where $u$ is in the form \eqref{new50}.
Consider the dual pair $SO_{T_0}\times Sp_8$ inside $Sp_{16}$. Its Adele points split in $Sp^{(2)}_{16}({\bf A})$. We may realize the Weil representation $\omega_{\psi,16}$ of $Sp^{(2)}_{16}({\bf A})$, corresponding to $\psi$, in $\mathcal{S}(Mat_{2\times 4}({\bf A}))$. Consider, for $\phi\in \mathcal{S}(Mat_{2\times 4}({\bf A}))$, the corresponding theta series $\theta_{\psi,16}^\phi$. We may take a splitting $i^0_T$ of $SO_{T_0}({\bf A})\times Sp_8({\bf A})$, such that $\omega_{\psi,16}(i^0_T(m,1))\phi(x)=\phi(m^{-1}x)$, for $m\in SO_{T_0}({\bf A})$, $x\in Mat_{2\times 4}({\bf A})$.
 
Let $\psi_{U_{2^2,16}}$ denote the character of $U_{2^2,16}$ defined by $\psi_{U_{2^2,16}}(u)=\psi(
\text{tr}(a_1))$. Let  $\psi_{U^0_{2^2,16},T}$ denote the character of $U^0_{2^2,16}({\bf A})$ defined by $\psi_{U^0_{2^2,16},T}(u)=\psi(\text{tr}(a_1))\psi(\text{tr}(TZ))\psi^{-1}(\text{tr}(y_0))$.
Again, we wrote $u$ in the form \eqref{new50}.

Let $f_{\Delta(\tau,4),s}$ be a smooth, holomorphic section of 
$Ind_{Q_8({\bf A})}^{Sp_{16}({\bf A})}\Delta(\tau,4)|\text{det}\cdot|^s$, and let $E(f_{\Delta(\tau,4),s})$ be the corresponding Eisenstein series on $Sp_{16}({\bf A})$. Consider the following Fourier-Jacobi coefficient of $E(f_{\Delta(\tau,4),s})$,
\begin{equation}\label{new51}
\int\limits_{U_{2^2,16}(F)\backslash U_{2^2,16}({\bf A})}
\theta_{\psi,16}^\phi(l_T^0(u)i^0_T(h_0,g_0))E(f_{\Delta(\tau,4),s})(u{}^d(h_0,g_0))
\psi^{-1}_{U_{2^2,16}}(u)du.
\end{equation}
Here, ${}^d(h_0,g_0)=\text{diag}(h_0,h_0,g_0,h_0^*,h_0^*)$, $\phi\in \mathcal{S}(Mat_{2\times 4}({\bf A}))$, $h_0\in SO_{T_0}({\bf A})$ and $g_0\in Sp_8({\bf A})$. Recall the quadratic character $\chi_T$ of $F^*\backslash {\bf A}^*$ ; $\chi_T(x)=(x, \text{det}(T))=(x,-t_1t_2)$. We have
\begin{lemma}\label{lem15}
Fix $h_0\in SO_{T_0}({\bf A})$. Then, as a function of $g_0\in Sp_8({\bf A})$, the integral \eqref{new51} is an Eisenstein series $E_{\tau\otimes\chi_T,2}(g_0,s)$ as in the beginning of Sec. \ref{not2}. More precisely, there is a smooth, meromorphic section $\lambda(f_{\Delta(\tau,4),s},\phi)$ of $Ind_{Q_4({\bf A})}^{Sp_8({\bf A})}\Delta(\tau\otimes\chi_T,2)|\text{det}\cdot|^s$, such that the integral \eqref{new51} is equal to the Eisenstein series $E(\lambda(f_{\Delta(\tau,4),s},\phi))$.
\end{lemma}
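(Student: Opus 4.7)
The plan is to combine the identities relating Eisenstein series from Section \ref{not2} with an Ikeda-type Fourier-Jacobi argument, in parallel with the proof of Theorem \ref{prop2} just above.

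First, I would decompose the integration over $U_{2^2,16}(F)\backslash U_{2^2,16}({\bf A})$ into two nested stages: an outer stage involving the blocks $a_1,\ldots,a_4$ that carry the character $\psi(\text{tr}(a_1))$, and an inner stage along the Heisenberg-like part which projects via $l_T^0$ to ${\mathcal H}_{17}$ and pairs with $\theta_{\psi,16}^\phi$. Applying a $2\times 2$-block analog of identity \eqref{id1}, followed by the matching analog of \eqref{id2}, I would reduce $E(f_{\Delta(\tau,4),s})$ on $Sp_{16}({\bf A})$ first to an Eisenstein series on $Sp_{12}^{(2)}({\bf A})$ for $\Delta(\tau,3)\gamma_{\psi^{-1}}|\text{det}\cdot|^s$, and then to an Eisenstein series on $Sp_8({\bf A})$ for $\Delta(\tau,2)|\text{det}\cdot|^s$.

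Second, I would invoke Ikeda's Theorem 3.2 (\cite{I}), as used in the proof of Theorem \ref{prop2}, to handle the remaining Heisenberg integration against $\theta_{\psi,16}^\phi$ through the dual pair $SO_{T_0}\times Sp_8$ inside $Sp_{16}$. The splitting of $SO_{T_0}({\bf A})\times Sp_8({\bf A})$ in $Sp_{16}^{(2)}({\bf A})$, combined with the Weil factors picked up from the two reductions in the first step, should produce the quadratic character $\chi_T$ as the correct twist on $\Delta(\tau,2)$, yielding $\Delta(\tau\otimes\chi_T,2)$ as the inducing data. The integral defining the section $\lambda(f_{\Delta(\tau,4),s},\phi)$ would arise explicitly from this combined procedure for $\text{Re}(s)$ sufficiently large, and continue meromorphically by the arguments of \cite{G-S}.

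The main obstacle is the careful bookkeeping of the metaplectic cocycle: one must verify that the Weil factors arising from two successive applications of the identities of Section \ref{not2} combine with the splitting of $SO_{T_0}\times Sp_8$ to yield precisely the twist $\chi_T=(\,\cdot\,,\det T)$ rather than some other quadratic character. A secondary difficulty is that the $2\times 2$-block analogs of \eqref{id1} and \eqref{id2} are not explicitly stated in \cite{G-S}; justifying them would require either iterating the $1\times 1$ versions with intermediate exchanges of roots via Lemma \ref{lemmix2}, or extending the methods of \cite{G-S} to the $2\times 2$-block framework directly.
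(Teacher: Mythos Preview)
Your proposal is in the spirit of the \emph{first} of the two methods the paper explicitly mentions---iterating identities \eqref{id1} and \eqref{id2} with root exchanges to pass $Sp_{16}\to Sp_{12}^{(2)}\to Sp_8$---but the paper actually carries out the \emph{second} method: a direct unfolding of $E(f_{\Delta(\tau,4),s})$. Concretely, the paper analyzes the double cosets $Q_8(F)\backslash Sp_{16}(F)/U_{2^2,16}(F)\,{}^d(1,Sp_8(F))$ (using the anisotropic assumption on $SO_{T_0}$ to simplify), shows that only one Weyl element $w_0$ survives, obtains the integral \eqref{new53}, applies a Speh--constant-term identity \eqref{new54}, unfolds the theta series, and then reads off the summand in \eqref{new56} as a section of $\Ind_{Q_4({\bf A})}^{Sp_8({\bf A})}\Delta(\tau\otimes\chi_T,2)|\det\cdot|^s$. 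The twist $\chi_T$ appears not via metaplectic cocycle bookkeeping but simply from the Jacobian of a change of variables in the $U^1$-integration together with the Weil representation factor $|\det A|$.

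Your plan, by contrast, has some internal tension. You speak of ``$2\times 2$-block analogs'' of \eqref{id1}, \eqref{id2} but then describe the chain $Sp_{16}\to Sp_{12}^{(2)}\to Sp_8$, which is exactly what two iterations of the $1\times 1$ identities give; no block generalization is needed, only root exchange to bring the $U_{2^2,16}$-integral into $U_{1^2}$-form twice. More seriously, your second step---a separate invocation of Ikeda's theorem to ``handle the remaining Heisenberg integration against $\theta_{\psi,16}^\phi$''---does not fit: the theta series is already present in \eqref{new51}, and the identities \eqref{id1}, \eqref{id2} themselves \emph{consume} a theta factor at each stage. The real work in the first approach is to match the $SO_{T_0}\times Sp_8$ theta series $\theta_{\psi,16}^\phi$ with the rank-one theta functions required by \eqref{id1}, \eqref{id2} (this is where $T_0=\mathrm{diag}(t_1,t_2)$ enters, and where the character $\chi_T=(\,\cdot\,,-t_1t_2)$ eventually shows up), not to bolt on an extra Ikeda step afterward. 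The paper's direct unfolding avoids this matching altogether and makes the appearance of $\chi_T$ transparent.
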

\begin{proof}
This lemma can be proved in two ways. The first proof uses the identities 
\eqref{id1} and \eqref{id2}. Indeed, after some root exchange process, we can use Identity \eqref{id1} to obtain the Eisenstein series $E_{\tau,3}^{(2)}(\cdot,s)$ defined on $Sp_{12}^{(2)}({\bf A})$. Then, using more root exchange, and Identity \ref{id2}, the proof will follow. 
	
A second approach is to prove the lemma directly by unfolding the Eisenstein series. We will give some details for this approach, and to simplify the computations, we will assume that $SO_{T_0}({\bf A})$ is anisotropic. In other words, we assume that $-t_1t_2$ is not a square in $F$. 
	
Set $h_0=1$. We start by unfolding the Eisenstein series in \eqref{new51}, assuming that $Re(s)$ is sufficienly large. We consider the space of double cosets $Q_8(F)\backslash Sp_{16}(F)/U_{2^2,16}(F){}^d(1,Sp_8(F))$. See Sec. \ref{not2} for notations. It follows from the Bruhat decomposition that all representatives can be chosen in the form
\begin{equation}\label{new52}
\gamma=wx_{\alpha_1}(c_1)x_{\alpha_3}(c_3)\ \ \ \ \ \ \ \ 
w=\begin{pmatrix} \epsilon_1&&\epsilon_2\\ &I_8&\\ \epsilon_3&&\epsilon_4\end{pmatrix}.
\end{equation}
Here, $x_{\alpha_1}(c_1)=I_{16}+c_1e'_{1,2}$, and $x_{\alpha_3}(c_3)=I_{16}+c_3e'_{3,4}$ where $c_i\in F$.  Also, $w$ is a Weyl element in $Sp_{16}(F)$, which is assumed to be a permutation matrix with nonzero entries $\pm 1$. We can choose the matrix  $\epsilon_1$ to be a diagonal matrix, and $\epsilon_2$ can be chosen so that all nonzero entries are on the other diagonal. Moreover, we can assume that all nonzero entries of $\epsilon_1$ and $\epsilon_2$ are ones. These conditions determine $w$ uniquely. 
	
Let $U_8$  denote the unipotent radical of $Q_8$. Let $\gamma$ denote a representative as in \eqref{new52}. If for some $u\in U_{2^2,16}({\bf A})$, such that $l^0_T(u)$ lies in the center of $\mathcal{H}_{17}({\bf A})$, we have $\psi_{U^0_{2^2,16},T}(u)\ne 1$, and $\gamma u\gamma^{-1}\in U_8({\bf A})$, then the double coset of $\gamma$ contributes zero to the integral \eqref{new51}. Note that $u$ as above lies in $U^0_{2^2,16}({\bf A})$. For $1\le i\le 4$, let $\epsilon_1(i)$ denote the $(i,i)$ entry of $\epsilon_1$. Assume that $\epsilon_1(3)=1$. Thus, for 
$u=I_{16}+de_{3,14}$ we have $\psi_{U^0_{2^2,16},T}(u)\ne 1$, and 
$\gamma u\gamma^{-1}\in U_8({\bf A})$. Hence, we may assume that 
$\epsilon_1(3)=0$, and that the $(2,3)$ entry of $\epsilon_2$ is one. Similarly, if $\epsilon_1(4)=1$, we can then chose a suitable matrix 
$u=I_{16}+d_1e_{4,13}+d_2e'_{3,13}+d_3e_{3,14}$ such that $\psi_{U^0_{2^2,16},T}(u)\ne 1$, and  $\gamma u\gamma^{-1}\in U_8({\bf A})$. We mention that at this point we use the fact that $-t_1t_2$ is not a square. 
	
We conclude that $\epsilon_1(4)=0$. If $\epsilon_1(1)=1$, we use 
$u=I_{16}+de'_{1,3}$, and if $\epsilon_1(2)=1$ we use $u=I_{16}+d_1e'_{2,4}+d_2e'_{1,4}$. We omit the details. Thus, we deduce that $\gamma$ is such that $\epsilon_1=0$. Matrix multiplication implies that we may assume that $\gamma$ in this case can be chosen so that $c_1=c_3=0$. Modifying the representative $\gamma$, we deduce that the only possible double coset representative which can contribute a nonzero term to integral \eqref{new51} is the containing the following representative
$$
w_0=\begin{pmatrix} &I_4&&\\ &&&I_4\\ -I_4&&&\\ &&I_4&\end{pmatrix}.
$$
	
Computing the stabilizer for $w_0$, we deduce that, for $Re(s)$ large enough, the integral \eqref{new51} is equal to
\begin{multline}\label{new53}
\sum_{\gamma\in Q_4(F)\backslash Sp_8(F)}\ \ \int\limits_{U^1({\bf A})}
\int\theta_{\psi,16}^\phi((0_{2\times 4},b_2,0)l_T^0(u_1)i^0_T(1,\gamma g_0))\\ 
f_{\Delta(\tau,4),s}\left (\begin{pmatrix} I_6&x&&\\ &I_2&&\\ &&I_2&x'\\ &&&I_6
\end{pmatrix}w_0\hat{b}_2u_1 {}^d(1,\gamma g_0)\right )\widetilde{\psi}
(x)\psi^{-1}_{U_1,T}(u_1)db_2dxdu_1.
\end{multline}
We explain the notation. We start with the definition of the group $U^1$. It is the subgroup of $U_{2^2,16}$ consisting of all matrices of the form \eqref{new5} with $k=4$, such that $b=0$. Notice that the projection $l_T^0$ is non-trivial on $U^1$; the character $\psi_{U^1,T}$ is defined as follows. Write the matrix $Z$ in \eqref{new5} as $Z=\begin{pmatrix} *&*\\ Z_1&*\end{pmatrix}$, where $Z_1\in Mat_2({\bf A})$. Then  $\psi_{U^1,T}(u_1)=\psi(\text{tr}(TZ_1))$. The integration domain for the $x$ variable is $Mat_{6\times 2}(F)\backslash 
Mat_{6\times 2}({\bf A})$. The character $\widetilde{\psi}$ is defined as follows. Write $x=\begin{pmatrix} x_1\\ x_2\end{pmatrix}$, where
$x_1\in Mat_{4\times 2}({\bf A})$ and $x_2\in Mat_2({\bf A})$. Then  
$\widetilde{\psi}(x)=\psi^{-1}(\text{tr}(x_2))$. Finally, the variable $b_2$
is integrated over $Mat_{2\times 4}(F)\backslash 
Mat_{2\times 4}({\bf A})$. It is embedded in $Sp_{16}$ as all matrices in \eqref{new5} with $a=0$, $Z=0$, and $b=\begin{pmatrix} 0_{2\times 4}\\ b_2\end{pmatrix}$. 
	
We claim that for all $h\in Sp_{16}({\bf A})$, we have
\begin{multline}\label{new54}
\int f_{\Delta(\tau,4),s}\left (\begin{pmatrix} I_6&x&&\\ &I_2&&\\ &&I_2&x'\\ &&&I_6
\end{pmatrix}h\right )\widetilde{\psi}(x)dx =\\ 
=\int f_{\Delta(\tau,4),s}\left (\begin{pmatrix} I_4&x_3&x_1&&&\\ &I_2&x_2&&&\\ &&I_2&&&\\ &&&I_2&x_2'&\star\\ &&&&I_2&x'_3\\ &&&&&I_4
\end{pmatrix}h\right )\psi^{-1}(tr(x_2))dx_i.
\end{multline}
Here, the integral on the left hand side is integrated as in integral 
\eqref{new53}. The integral on the right hand side has an extra integration over the variable $x_3$ which is integrated over 
$Mat_{4\times 2}(F)\backslash  Mat_{4\times 2}({\bf A})$. The proof of this identity is exactly as the proof of Proposition 2.4 in \cite{G-S}. 
For simplicity, we shall denote by $f^{\psi'}_{\Delta(\tau,4),s}(h)$ the integral on the right hand side of equation \eqref{new54}. 
	
Plugging this identity in integral \eqref{new53}, we then conjugate the matrix $\hat{b}_2$ across $w_0$, and then we change variables in $x_3$. Thus, integral \eqref{new51} is equal to 
	\begin{equation}\label{new55}\notag
	\sum_{\gamma\in Q_4(F)\backslash Sp_8(F)}\ \ \int\limits_{U^1({\bf A})}
	\int\theta_{\psi,16}^\phi((0_{2\times 4},b_2,0)l_T^0(u_1)i^0_T(1,\gamma g_0))db_2\\ 
	f^{\psi'}_{\Delta(\tau,4),s}\left (w_0u_1{}^d(1,\gamma g_0)\right )\psi^{-1}_{U^1,T}(u_1)du_1.
	\end{equation} 
	Unfolding the theta series and integrating over $b_2$, we deduce that integral \eqref{new51} is equal to (for $Re(s)$ large)
	\begin{equation}\label{new56}
	\sum_{\gamma\in Q_4(F)\backslash Sp_8(F)}\ \ \int\limits_{U^1({\bf A})}
	\omega_{\psi,16}(l_T^0(u_1)i^0_T(1,\gamma g_0))\phi(0)
	f^{\psi'}_{\Delta(\tau,4),s} (w_0u_1{}^d(1,\gamma g_0))\psi^{-1}_{U^1,T}(u_1)du_1.
	\end{equation}
	
		We claim that the integral 
	\begin{equation}\label{new57}\notag
	\lambda(f_{\Delta(\tau,4),s},\phi)=\int\limits_{U^1({\bf A})}
	\omega_{\psi,16}(l_T^0(u_1)i^0_T(1,g_0))\phi(0)
	f^{\psi'}_{\Delta(\tau,4),s} (w_0u_1{}^d(1,g_0))\psi^{-1}_{U^1,T}(u_1)du_1
	\end{equation}
	is a section of the induced representation $Ind_{Q_4({\bf A})}^{Sp_8({\bf A})}\Delta(\tau\otimes\chi_T,2)|\text{det}\cdot|^s$. To prove this we consider a matrix of the form $g_0=\begin{pmatrix} A&B\\
	&A^*\end{pmatrix}\in Q_8({\bf A})$ where $A\in GL_4({\bf A})$. It is easy to check that the integral is left invariant under 
	$g=\begin{pmatrix} I_4&B\\ &I_4\end{pmatrix}$. Next plug $g_0=\begin{pmatrix} A&\\ &A^*\end{pmatrix}$, and conjugate it to the left. 
	We obtain $\chi_T(\text{det} A)|\text{det} A|^{-4}$ from the change of variables in $u_1$. Then we obtain a factor of $|\text{det} A|$ from the action of the Weil representation, and  another factor of $|\text{det} A|^s\delta_{Q_8}^{1/2}(\text{diag}(A,I_8,A^*))=|\text{det} A|^{s+9/2}$ from the section $f_{\Delta(\tau,4),s}$. Finally, notice that the integral $f^{\psi'}_{\Delta(\tau,4),s}$ contains the constant term along the standard parabolic subgroup $L$ of $GL_8$ whose Levi part is isomorphic to $GL_4\times GL_4$. This constant term contributes 
	the factor $\Delta(\tau,2)|\text{det} A|^{-1}\delta_{L}^{1/2}(\text{diag}(A,I)=
	\Delta(\tau,2)|\text{det} A|$. Combining all these factors we get that $g_0$ is transformed as $\Delta(\tau\otimes \chi_T,2)|\cdot|^s\delta_{Q_4}^{1/2}$. 
	
	\end{proof}

To introduce the new integral construction for the tensor product $L$-function $L(\pi\times\tau,s)$, we start with the global doubling construction for this $L$-function  given in \cite{C-F-G-K1}. Then we write down the corresponding function $\xi(\varphi_\pi,f_{\Delta(\tau,4),s})$ given by integral \eqref{dou3}, and finally compute the Fourier coefficient of $\xi(\varphi_\pi,f_{\Delta(\tau,4),s})$ given by integral \eqref{new1}. 

It follows from \cite{C-F-G-K1} that in this case, the function 
$\xi(\varphi_\pi,f_{\Delta(\tau,4),s})$ is given by
\begin{equation}\label{new20}
\xi(\varphi_\pi,f_{\Delta(\tau,4),s})(g)=\int\limits_{Sp_4(F)\backslash Sp_4({\bf A})}
\int\limits_{U_{4,16}(F)\backslash U_{4,16}({\bf A})}\varphi_\pi(h)
E(f_{\Delta(\tau,4),s})(u{}t(g,h))\psi^{-1}_{U_{4,16}}(u)dudh.
\end{equation}

As in equation \eqref{new1}, our goal is to compute the integral
\begin{equation}\label{new201}\notag
\int\limits_{Mat_2^0(F)\backslash Mat_2^0({\bf A})}\xi(\varphi_\pi,f_{\Delta(\tau,4),s})
\left (\begin{pmatrix} I_2&Z\\ &I_2\end{pmatrix} \right )\psi^{-1}(\text{tr}(TZ))dZ.
\end{equation}
It is equal to
\begin{multline}\label{new23}
\int\limits_{Mat_2^0(F)\backslash Mat_2^0({\bf A})}\int\limits_{Sp_4(F)\backslash Sp_4({\bf A})}
\int\limits_{U_{4,16}(F)\backslash U_{4,16}({\bf A})}\varphi_\pi(h)
E(f_{\Delta(\tau,4),s})(u\cdot {}t\left(\begin{pmatrix} I_2&Z\\ &I_2\end{pmatrix},h\right))\\
\psi^{-1}_{U_{4,16}}(u)\psi^{-1}(\text{tr}(TZ))dudhdZ.
\end{multline}
As explained in the introduction, after the statement of Theorem \ref{th1}, if we unfold this integral by first unfolding the Eisenstein series,  we will obtain an integral involving the functional \eqref{new1}. We conjecture that this integral will be Eulerian even-though this functional is not unique (locally at each place). However, our goal is to apply Fourier expansions and identities between Eisenstein series to obtain a "simpler" integral. More precisely, we will prove
\begin{theorem}\label{prop3}
Given $\phi\in \mathcal{S}(Mat_2({\bf A}))$, there are nontrivial choices of sections $f_{\Delta(\tau,4),s}$, $f'_{\Delta(\tau\otimes \chi_T,2),s}$ of $Ind_{Q_8({\bf A})}^{Sp_{16}({\bf A})}\Delta(\tau,4)|det\cdot|^s$, $Ind_{Q_4({\bf A})}^{Sp_8({\bf A})}\Delta(\tau\otimes\chi_T,2)|det\cdot|^s$, respectively, such that integral \eqref{new23} is equal to the integral
	\begin{equation}\label{new24}
	\int\limits_{Sp_{4}(F)\backslash Sp_{4}({\bf A})}
	\int\limits_{U_{2,8}(F)\backslash U_{2,8}({\bf A})}
	\varphi_\pi(h)
	\theta_{\psi,8}^\phi(l_T(v)i_T(1,h))E(f'_{\Delta(\tau\otimes\chi_T,2),s})(v(1,h))dvdh.
	\end{equation}
\end{theorem}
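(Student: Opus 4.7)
The strategy is to transform the unipotent integrations in \eqref{new23} into exactly the Fourier--Jacobi coefficient \eqref{new51} treated in Lemma \ref{lem15}, apply that lemma to collapse the Eisenstein series on $Sp_{16}({\bf A})$ attached to $\Delta(\tau,4)$ to an Eisenstein series on $Sp_8({\bf A})$ attached to $\Delta(\tau\otimes\chi_T,2)$, and then reinterpret the remaining integrations as the theta-twisted $U_{2,8}$-coefficient appearing in \eqref{new24}.

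First I would package the $Z$-integration over $Mat_2^0(F)\backslash Mat_2^0({\bf A})$ together with the $U_{4,16}$-integration as a single Fourier coefficient of $E(f_{\Delta(\tau,4),s})$ along a larger unipotent subgroup of $Sp_{16}$. The block-diagonal embedding $t(\begin{pmatrix} I_2 & Z \\ & I_2 \end{pmatrix}, h)$ places copies of $Z$ in the upper-right corner of each of the first, middle, and third diagonal blocks of $Sp_{16}$; after commuting through $u\in U_{4,16}$, the $Z$-directions together with $U_{4,16}$ span a unipotent radical containing $U_{2^2,16}$, and along the central directions of $U_{2^2,16}$ the combined character $\psi_{U_{4,16}}^{-1}\cdot\psi^{-1}(\text{tr}(TZ))$ restricts to the character $\psi_{U^0_{2^2,16},T}$ from Lemma \ref{lem15}.

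Next I would perform a sequence of root exchanges via Lemma \ref{lemmix2} to bring the integrand into precisely the form \eqref{new51}. Each exchange replaces one abelian integration by its Fourier-dual; the additional ${\bf A}$-valued integrations appearing along the way are absorbed, via the action of the Weil representation, into the Schwartz data of an auxiliary theta series $\theta_{\psi,16}^{\phi'}$ with $\phi'\in\mathcal{S}(Mat_{2\times 4}({\bf A}))$ built from the given $\phi$ and additional Schwartz functions. At that point the integrand is exactly the Fourier--Jacobi coefficient \eqref{new51} evaluated at ${}^d(1,g_0)$, and Lemma \ref{lem15} identifies it with $E(\lambda(f_{\Delta(\tau,4),s},\phi'))(g_0)$. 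Setting $f'_{\Delta(\tau\otimes\chi_T,2),s}:=\lambda(f_{\Delta(\tau,4),s},\phi')$, the outer $Sp_4(F)\backslash Sp_4({\bf A})$-integration against $\varphi_\pi$ survives from \eqref{new23}, while the residual directions of the bigger unipotent give exactly $U_{2,8}$ inside $Sp_8$; the theta series $\theta_{\psi,16}^{\phi'}$ descends to $\theta_{\psi,8}^{\phi}(l_T(v)i_T(1,h))$ under the embedding of $Sp_8$ into $Sp_{16}$, producing the integral \eqref{new24}.

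The main obstacle is the combinatorial bookkeeping of the root exchanges: one must check that each invocation of Lemma \ref{lemmix2} satisfies its nondegeneracy and $\Omega'$-hypotheses for the specific embedding $t$ and element $\begin{pmatrix}I_2&Z\\&I_2\end{pmatrix}$, and that the sequence of exchanges produces precisely one surviving theta integration rather than leftover Jacobi integrals. A secondary technical point is verifying, using that $k=2$ is even, that the splittings $i_T$ and $i_T^0$ of the dual pairs $SO_{T_0}\times Sp_{2m}$ in $Sp^{(2)}_{2m^2}({\bf A})$ for $m=2$ and $m=4$ are compatible, so that the twist by $\chi_T$ in the Speh data emerges on $Sp_8({\bf A})$ with no residual metaplectic cocycle.
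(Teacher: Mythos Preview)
Your overall strategy---root exchanges to reshape the unipotent integration, then invoke Lemma \ref{lem15}, then descend the theta to $Sp_8$---matches the paper's approach. But there is a genuine gap in how you introduce the theta series $\theta_{\psi,16}^{\phi'}$. Root exchanges via Lemma \ref{lemmix2} only shift Adelic unipotent integrations from one subgroup to another and absorb them into the section by convolution; they do not by themselves manufacture a theta kernel. In the paper the theta enters through Ikeda's theorem on Fourier--Jacobi coefficients: after the root exchange and a conjugation by an explicit Weyl element $w_0$ (which you also omit), the inner integration is over $U^0_{2^2,16}(F)\backslash U^0_{2^2,16}({\bf A})$ against $\psi^{-1}_{U^0_{2^2,16},T}$, and this group has a generalized Heisenberg structure. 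Ikeda's result then says that for sections of the special convolved form $\check{f}^{\phi_1,\phi_2}_{\Delta(\tau,4),s}$ this coefficient factors as $\theta_{\psi,16}^{\phi_1}$ times the full $U_{2^2,16}$-integral weighted by $\overline{\theta_{\psi,16}^{\phi_2}}$---and it is \emph{that} integral which matches \eqref{new51}. Without this step you cannot apply Lemma \ref{lem15}, since \eqref{new51} already contains a theta factor; your sentence ``absorbed, via the action of the Weil representation, into the Schwartz data'' does not supply a mechanism for producing it.

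A second, smaller omission is the Weyl conjugation itself. After the root exchange the resulting group $D$ is not yet $U^0_{2^2,16}$; one conjugates by $w_0=\text{diag}(w,w^*)$ with $w$ the permutation swapping the second and third $2\times 2$ blocks of $GL_8$, and this sends $D\cdot t(\begin{pmatrix}I_2&Z\\&I_2\end{pmatrix},I_4)$ onto $U^0_{2^2,16}\cdot \hat{U}_{2,8}$, where $\hat{U}_{2,8}$ is the copy of $U_{2,8}$ embedded via $v\mapsto\text{diag}(I_4,v,I_4)$. This is what cleanly separates the $U_{2,8}$-direction surviving in \eqref{new24} from the Heisenberg directions that feed into Ikeda and Lemma \ref{lem15}. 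Finally, the reduction of $\theta_{\psi,16}^{\phi_1}$ to $\theta_{\psi,8}^{\phi}$ is carried out by an explicit integration over the remaining piece $Y_0(F)\backslash Y_0({\bf A})$ against $\psi_{Y_0}$, which picks out the restriction $\phi_1'(x)=\phi_1(J_2T_0^{-1},x)$; this is a concrete Weil-representation computation, not an automatic ``descent under the embedding.''
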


\begin{proof}
	The first step is to perform certain root exchanges in the inner $du$-integration inside integral \eqref{new23}. The process of root exchange was defined in general in \cite{G-R-S2} Section 7. In the notations of Lemma \ref{lemmix2}, let $B=U_{4,16}$; $Y$ is the subgroup of $B$ consisting of the elements \eqref{new5}, such that $b=Z=0$, and the coordinates of $a$ are zero, except those at the entries $(i,1)$, $(i,2)$, for $i=3,4$, which can be arbitrary; $C$ is the subgroup of $B$ generated by the root subgroups in $U_{4,16}$ which do not lie in $Y$; $X$ is generated by the root subgroups $I_{16}+x_{i,j}e'_{i,j}$, for $i=1,2$, $j=3,4$. Then $D=CX$. One can check that the first part of Lemma \ref{lemmix2} applies, and that the elements $t\left (\begin{pmatrix} I_2&Z\\ &I_2\end{pmatrix},h\right)$ in the integrand of \eqref{new23} belong to the set $\Omega$ in Lemma \ref{lemmix2}. 
	By the proof of the second part of Lemma \ref{lemmix2}, let $\tilde{f}_{\Delta(\tau,4),s}$ be a smooth, holomorphic section of $Ind_{Q_8({\bf A})}^{Sp_{16}({\bf A})}\Delta(\tau,4)|\text{det}\cdot|^s$, and let $\xi\in C_c^\infty(X({\bf A}))$. Take 
	$$
	f_{\Delta(\tau,4),s}(g)=\int\limits_{X({\bf A})} \xi(x)\tilde{f}_{\Delta(\tau,4),s}(gx)dx,\ g\in Sp_{16}(\bf A).
	$$
	Denote
	$$
	f'_{\Delta(\tau,4),s}(g)=\int\limits_{Y_1({\bf A})}\int\limits_{Y({\bf A})}\hat{\xi}(y)\tilde{f}_{\Delta(\tau,4),s}(gy)dy.
	$$
	Then the proof of the second part of Lemma \ref{lemmix2} shows that, for all $Z\in Mat_2^0(\bf A)$, $h\in Sp_4(\bf A)$,
	\begin{multline}\label{new24.3}
	\int\limits_{U_{4,16}(F)\backslash U_{4,16}({\bf A})} 
	E(f_{\Delta(\tau,4),s})(u\cdot t\left(\begin{pmatrix} I_2&Z\\ &I_2\end{pmatrix},h\right))\psi^{-1}_{U_{4,16}}(u)du=\\
	\int\limits_{D(F)\backslash D({\bf A})} 
	E(f'_{\Delta(\tau,4),s})(u\cdot t\left(\begin{pmatrix} I_2&Z\\ &I_2\end{pmatrix},h\right))\psi^{-1}_D(u)du.
	\end{multline}
	
	Let $w_0$ denote the Weyl element in $Sp_{16}(F)$ defined as
	$$w=\begin{pmatrix} I_2&&&\\ &&I_2&\\ &I_2&&\\ &&&I_2\end{pmatrix}\ \ \ \ 
	\ \ \ w_0=\begin{pmatrix} w&\\ &w^*\end{pmatrix}.$$
	On the r.h.s. of \eqref{new24.3}, we may replace in the Eisenstein series $u$ by $w_0u$. Now carry out the conjugation by $w_0$ in the integrand. Denote the right $w_0$-translate of $f'_{\Delta(\tau,4),s}$ by $f''_{\Delta(\tau,4),s}$. Note that the subgroup of all elements $w_0ut\left( \begin{pmatrix}I_2&Z\\&I_2\end{pmatrix},I_4\right)w_0^{-1}$, for $u\in D$, $Z\in Mat^0_2$, is equal to $U^0_{2^2,16}\hat{U}_{2,8}$, where $\hat{U}_{2,8}$ is the image of $U_{2,8}$ inside $Sp_{16}$ under the embedding $v\mapsto diag(I_4,v,I_4)$. On Adele points, the character $\psi_D(u)\psi(tr(TZ))$ goes to the character of $U^0_{2^2,16}({\bf A})\hat{U}_{2,8}({\bf A})$, which is trivial on $\hat{U}_{2,8}({\bf A})$ and on $U^0_{2^2,16}({\bf A})$ is equal to the character $\psi_{U^0_{2^2,16},T}$. Thus, substituting \eqref{new24.3} in \eqref{new23}, we get
	\begin{equation}\label{new25}
	\int\limits_{Sp_4(F)\backslash Sp_4({\bf A})}
	\int\limits_{U_{2,8}(F)\backslash U_{2,8}({\bf A})}
	\int\limits_{U^0_{2^2,16}(F)\backslash U^0_{2^2,16}({\bf A})}
	\varphi_\pi(h)
	E(f''_{\Delta(\tau,4),s})(u\hat{v}t(I_4,h),s)\psi^{-1}_{U^0_{2^2,16},T}(u)dudvdh.
	\end{equation}
		
	Note that for $h\in Sp_4$, $t(I_4,h)={}^d(I_2,diag(I_2,h,I_2))$. It is convenient to denote this by $\hat{h}$. We will use this notation also for $diag(I_2,h,I_2)$. Factor the group $U^0_{2^2,16}$ as $U^0_{2^2,16}=U_0Y_0$, where in the notation right after \eqref{new50}, $U_0$ is the subgroup of $U^0_{2^2,16}$ such that $y_0=0$, and $Y_0$ is the subgroup of $U^0_{2^2,16}$, such that in \eqref{new50}, $Z=0$, and for all $1\le i\le 4$,  $a_i=0$. Denote by  
	$\psi_{U_0,T}$ the restriction of $\psi_{U^0_{2^2,16},T}$ to $U_0$, and by $\psi_{Y_0}$ the restriction of $\psi_{U^0_{2^2,16},T}$ to $Y_0$.
	
	Consider the integral
	\begin{equation}\label{new251}
	\int\limits_{U_0(F)\backslash U_0({\bf A})}
	E(f''_{\Delta(\tau,4),s})(u_0u{}^d(h_0,g_0),s)\psi^{-1}_{U_0,T}(u_0)du_0
	\end{equation}
	as a function of $u\in U_{2^2,16}({\bf A})$ and of $(h_0,g_0)\in SO_{T_0}({\bf A})\times Sp_8({\bf A})$. Let us apply the theorem of Ikeda \cite{I}, as we did in the previous sub-section. The relevant (Adelic) Heisenberg group here is $\mathcal{H}_{17}({\bf A})$, realized as $Mat_{2\times 8}({\bf A})\times {\bf A}$. Let $\phi_1, \phi_2\in \mathcal{S}(Mat_{2\times 4}{\bf A})$, and consider the function $\varphi_{\phi_1,\phi_2}$ on $\mathcal{H}_{17}(\bf A)$ as in \cite{I}, p. 621. Let $\check{f}_{\Delta(\tau,4),s}$ be a smooth, holomorphic section of $Ind_{Q_8({\bf A})}^{Sp_{16}({\bf A})}\Delta(\tau,4)|\text{det}\cdot|^s$. Construct the section $\check{f}^{\phi_1,\phi_2}_{\Delta(\tau,4),s}$ similar to \eqref{new7.2}. As in \eqref{new7.1}, Ikeda's theorem tells us the following. Fix in \eqref{new251} $u\in U_{2^2,16}({\bf A})$, $h_0\in SO_{T_0}({\bf A})$, $g_0\in Sp_8({\bf A})$. Then
	\begin{multline}\label{new26}
	\int\limits_{U_0(F)\backslash U_0({\bf A})}
	E(\check{f}^{\phi_1,\phi_2}_{\Delta(\tau,4),s})(u_0u{}^d(h_0,g_0),s)\psi^{-1}_{U_0,T}(u_0)du_0= \\
	\theta_{\psi,16}^{\phi_1}(l^0_T(u)i^0_T(h_0,g_0))
	\int\limits_{U_{2^2,16}(F)\backslash U_{2^2,16}({\bf A})}
	\overline{\theta_{\psi,16}^{\phi_2}(l^0_T(u)i^0_T(h_0,g_0))}E(\check{f}_{\Delta(\tau,4),s})(u{}^d(h_0,g_0))\psi^{-1}_{U_{2^2,16}}(u)du.
	\end{multline}
	
	It follows from Lemma \ref{lem15}, with $h_0=1$, that there is a smooth, meromorphic section $\lambda(\check{f}_{\Delta(\tau,4),s},\phi_2)$ of $Ind_{Q_4({\bf A})}^{Sp_8({\bf A})}\Delta(\tau\otimes\chi_T,2)|\text{det}\cdot|^s$, such that the r.h.s. of \eqref{new26} is equal 
	\begin{equation}\label{new27}
\theta_{\psi,16}^{\phi_1}(l^0_T(u)i^0_T(1,g_0))E(\lambda(f_{\Delta(\tau,4),s},\phi_2))(g_0).
	\end{equation}

Since we don't know that $f''_{\Delta(\tau,4),s}$ is a sum of sections of the form $\check{f}^{\phi_1,\phi_2}_{\Delta(\tau,4),s}$, let us start with a smooth, holomorphic section $\check{f}_{\Delta(\tau,4),s}$. We can find Schwartz functions $\xi_i\in \mathcal{S}(X({\bf A}))$, $1\leq i\leq r$, and smooth, holomorphic sections $f^{(i)}_{\Delta(\tau,4),s}$, such that 
\begin{equation}\label{new26.1}
\rho(w_0)\check{f}^{\phi_1,\phi_2}_{\Delta(\tau,4),s}=\sum_{i=1}^r\int\limits_{Y({\bf A})}\hat{\xi}_i(y)\rho(y)f^{(i)}_{\Delta(\tau,4),s}dy.
\end{equation}
Now define
\begin{equation}\label{new26.2}
f_{\Delta(\tau,4),s}=\sum_{i=1}^r\int\limits_{X({\bf A})} \xi_i(x)\rho(x)f^{(i)}_{\Delta(\tau,4),s}dx.
\end{equation}
For this choice of $f_{\Delta(\tau,4),s}$, the integral \eqref{new23} is equal to 
\begin{multline}\label{new26.3}
\int\limits_{Sp_4(F)\backslash Sp_4({\bf A})}
\int\limits_{U_{2,8}(F)\backslash U_{2,8}({\bf A})}
\int\limits_{Y_0(F)\backslash Y_0({\bf A})}
\varphi_\pi(h)
\theta_{\psi,16}^{\phi_1}(l^0_T(y)i^0_T(1,v\hat{h}))\\
E(\lambda(\check{f}_{\Delta(\tau,4),s},\phi_2))(v\hat{h})\psi_{Y_0}^{-1}(y)dydvdh.
\end{multline}

Using the action of the Weil representation it is not hard to check that the integral over $Y_0(F)\backslash Y_0({\bf A})$ is equal to $\theta_{\psi,8}^{\phi'_1}
	(l_T(v)i_T(1,h))$, where $\phi'_1\in \mathcal{S}(Mat_2({\bf A}))$ is obtained from $\phi_1$ by $\phi'_1(x)=\phi_1(J_2T_0^{-1},x)$, $x\in Mat_2({\bf A})$. From this the theorem follows.
	
\end{proof}

With this result we can state the following,
\begin{conjecture}\label{conjnew}
	The integral \eqref{new24} is Eulerian, and represents the tensor product $L$-function $L(\pi\times\tau,s)$.
\end{conjecture}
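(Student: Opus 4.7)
My plan is to prove Conjecture \ref{conjnew} by combining two complementary routes. The indirect route harvests Eulerianity from Theorem \ref{th1} via the identification \eqref{new24}$=$\eqref{new23} already supplied by Theorem \ref{prop3}, while a direct unfolding of \eqref{new24} is used to verify that this Eulerian factorization matches the structure produced by the standard unfolding of the Eisenstein series $E(f'_{\Delta(\tau\otimes\chi_T,2),s})$. This two-pronged approach is needed because $\varepsilon_T$ is not a unique functional, so direct factorization via a local uniqueness argument is unavailable.

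For the direct unfolding, I would expand $E(f'_{\Delta(\tau\otimes\chi_T,2),s})$ as a sum over $\gamma\in Q_4(F)\backslash Sp_8(F)$ and partition it by double cosets with respect to the stabilizer of $\varphi_\pi(h)\theta^{\phi}_{\psi,8}(l_T(v)i_T(1,h))$ on $U_{2,8}\cdot Sp_4^\Delta$. Using the Bruhat decomposition, the cuspidality of $\pi$, and a smallness argument of the type carried out in \cite{G}, Prop.~5.2, for the Fourier coefficients supported by $\Delta(\tau\otimes\chi_T,2)$, I expect all but one double coset to vanish. After handling the contributing orbit by further Fourier expansion, change of variables in $U_{2,8}$, and use of the action of $\omega_{\psi_T}$ on the theta series, the inner integration over $Sp_4(F)\backslash Sp_4({\bf A})$ should collapse onto the Fourier coefficient $\varepsilon_T$ of \eqref{new1} applied to translates of $\varphi_\pi$, leaving an outer Adelic integration against a Whittaker-Speh-Shalika-type section of the induced representation.

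To deduce Eulerianity and extract the $L$-factor, I would combine \eqref{new24}$=$\eqref{new23} with the decomposition \eqref{dou6.8} applied to \eqref{new23}, which immediately yields the ratio $L^S(\pi\times\tau,s+\tfrac{1}{2})/d^S(s)$ as a factor. A direct unramified computation at $v\notin S$ in the spirit of \cite{PS-R2}, but with the standard local integral replaced by the Whittaker-Speh-Shalika unramified integral of \cite{C-F-G-K1} and with $\chi_T$ accounting for the discriminant, should recover $L(\pi_v\times\tau_v,s+\tfrac{1}{2})$ divided by the expected local analogue of the normalizing quotient appearing in \eqref{new2.2}.

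The main obstacle is showing that the local integrals arising from the direct unfolding at places $v\in S$ factor correctly, i.e., coincide (up to a scalar) with the local integrals encoded in \eqref{dou6.8}. Phrased differently, one has to check that \eqref{new24}, viewed as a global integral of standard Rankin-Selberg type, admits a factorization whose local factors match those predicted by the doubling method. This local matching, rather than Eulerianity itself, is the technical heart of the argument, and is precisely the conceptual difficulty behind the New Way phenomenon; the approach outlined here explains \emph{why} such an integral is expected to be Eulerian without having to establish local uniqueness of $\varepsilon_T$.
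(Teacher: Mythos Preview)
The statement you are attempting to prove is labeled a \emph{Conjecture} in the paper, and the paper offers no proof of it; it is presented precisely as an open problem motivated by the procedure of deriving global integrals from the doubling identity. So there is no ``paper's own proof'' to compare against, and your proposal should be read as an attempt to settle an open conjecture rather than to reproduce a known argument.

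Your proposal is a plan, not a proof, and it does not close the gap that makes this a conjecture rather than a theorem. The indirect route via Theorem~\ref{prop3} and \eqref{dou6.8} shows only that, for the \emph{specific} sections $f_{\Delta(\tau,4),s}$ and $f'_{\Delta(\tau\otimes\chi_T,2),s}$ produced by the construction in Theorem~\ref{prop3}, the integral \eqref{new24} equals an expression carrying the factor $L^S(\pi\times\tau,s+\tfrac12)/d^S(s)$. This does not establish that \eqref{new24} is Eulerian for a general decomposable section, nor does it identify the local integrals of \eqref{new24} at all places with a product. At best it gives Eulerianity along a subfamily of data, which is weaker than the conjecture as stated.

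The direct route you sketch is exactly where the content lies, and you yourself flag the obstacle: after unfolding to an expression involving $\varepsilon_T$, one must show that the resulting global integral factors over places despite the non-uniqueness of the local functional $\varepsilon_{T,v}$. This is the New Way phenomenon, and it is not resolved by invoking \eqref{dou6.8}; what is needed is an argument in the style of \cite{PS-R2} (or \cite{B-F-G}) showing that the local integrals, though attached to a non-unique model, nevertheless depend only on the unramified Hecke eigenvalues and assemble into the correct Euler product. Your outline names this step but does not carry it out, so the proposal remains a heuristic for why the conjecture is plausible rather than a proof of it---which is, in fact, the same status the paper assigns to it.
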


\section{An Example with a  Fourier-Jacobi mixed model for symplectic groups }\label{fj}
Denote, for an odd number  $n$,  $V_n=U_{1,n+1}$. This is the unipotent radical of the standard  parabolic subgroup of $Sp_{n+1}$, whose Levi part is isomorphic to $GL_1\times Sp_{n-1}$. Thus, $V_n$ is isomorphic to the Heisenberg group  ${\mathcal H}_n$. Recall that we denote this isomorphism by $i_n$.

Let $\pi$ denote an irreducible, automorphic, cuspidal representation of $Sp_4^{(2)}({\bf A})$. Assume that there is an irreducible, automorphic, cuspidal representation $\sigma$ of $SL_2({\bf A})$,  such that the integral
\begin{equation}\label{fj1}
\int\limits_{SL_2(F)\backslash SL_2({\bf A})}
\int\limits_{V_3(F)\backslash V_3({\bf A})}\varphi_\pi^{(2)}(v_3\hat{g})
\theta_{\psi^{-1},2}^\phi(i_3(v_3)g)\varphi_\sigma(g^\iota)dv_3dg
\end{equation}
is not zero for some choice of data. Here, $\hat{g}=diag(1,g,1)$. For the definition of $g^\iota$, see equation \eqref{id5}. Also, $\phi\in \mathcal{S}({\bf A})$. We say that $\pi$ has a (global) Fourier-Jacobi 9mixed) model with respect to $\sigma$ (and $\psi$). 

Let $\tau$ denote an irreducible, automorphic, cuspidal representation of $GL_2({\bf A})$. In \cite{G-J-R-S}, an integral representation is introduced which represents the (partial) standard $L$-function $L^S_\psi(\pi\times\tau,s)$. That integral unfolds to the Fourier-Jacobi coefficient given by integral \eqref{fj1}. The integral introduced in \cite{G-J-R-S} for this case is
\begin{equation}\label{fj2}
\int\limits_{Sp_4(F)\backslash Sp_4({\bf A})}
\int\limits_{V_5(F)\backslash V_5({\bf A})}
\varphi_\pi^{(2)}(g)\theta_{\psi,4}^{\phi'}(i_5(v_5)g)E_{\tau,\sigma}(v_5g,s)dv_5dg
\end{equation}
The Eisenstein series $E_{\tau,\sigma}(\cdot,s)$ was defined in sub-section \ref{not3} right before equation \eqref{id5}. Also, $\phi'\in \mathcal{S}({\bf A}^2)$.

The doubling integral which represents the above partial $L$-function was announced in \cite{C-F-G-K2}. Thus, in this case, we take, for $g\in Sp^{(2)}_4({\bf A})$,
\begin{equation}\label{fj3}
\xi^{(2)}(\varphi_\pi,f_{\Delta(\tau,4)\gamma_\psi,s})(g)=\int\limits_{Sp_4(F)\backslash Sp_4({\bf A})}
\int\limits_{U_{4,16}(F)\backslash U_{4,16}({\bf A})}\varphi^{(2)}_\pi(h)
E^{(2)}(f_{\Delta(\tau,4)\gamma_\psi,s})(ut(g,h))\psi^{-1}_{U_{4,16}}(u)dudh.
\end{equation}
Here, $E^{(2)}(f_{\Delta(\tau,4)\gamma_\psi,s})$ is an Eisenstein series corresponding to a smooth, holomorphic section $f_{\Delta(\tau,4)\gamma_\psi,s}$ of $Ind_{Q^{(2)}_8({\bf A})}^{Sp^{(2)}_{16}({\bf A})}\Delta(\tau,4)\gamma_\psi |\text{det}\cdot|^s$.
Therefore, we want to consider the integral 
\begin{equation}\label{fj4}
\int\limits_{SL_2(F)\backslash SL_2({\bf A})}
\int\limits_{V_3(F)\backslash V_3({\bf A})}\xi^{(2)}(\varphi_\pi,f_{\Delta(\tau,4)\gamma_\psi,s})(v_3\hat{g})
\theta_{\psi^{-1},2}^\phi(i_3(v_3)g)\varphi_\sigma(g^\iota)dv_3dg,
\end{equation}
and we prove
\begin{theorem}\label{prop4}
There is a nontrivial choice of data such that integral \eqref{fj4} is equal to 
integral \eqref{fj2}.
\end{theorem}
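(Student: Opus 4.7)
The plan mirrors the strategy of Theorem~\ref{prop3}: use Lemma~\ref{lemmix2} to reshape the unipotent integrations, then collapse the large Eisenstein series on $Sp_{16}^{(2)}({\bf A})$ down to the Eisenstein series $E_{\tau,\sigma}$ on $Sp_6({\bf A})$ by successive applications of the identities of Section~\ref{not2}. After substituting \eqref{fj3} into \eqref{fj4}, one obtains a multiple integration whose innermost object is the Eisenstein series $E^{(2)}(f_{\Delta(\tau,4)\gamma_\psi,s})$, paired against the cusp form $\varphi^{(2)}_\pi(h)$, the Fourier--Jacobi data $\theta^\phi_{\psi^{-1},2}(i_3(v_3)g)\varphi_\sigma(g^\iota)$, and the unipotent integrations over $V_3$ and $U_{4,16}$.

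The first main step would be to apply Lemma~\ref{lemmix2} to reshape the $U_{4,16}$-integration so that, together with the $V_3$-integration, it exposes a copy of $U_{1^2,16}$ whose character matches the character $\psi_1$ of identity \eqref{id2} (taken with $m=4$). Carrying out that inner integration, and combining it with the Fourier--Jacobi theta $\theta^\phi_{\psi^{-1},2}$ via the Weil representation, replaces the metaplectic Eisenstein series $E^{(2)}(f_{\Delta(\tau,4)\gamma_\psi,s})$ by a (classical) Eisenstein series of the form $E_{\tau,3}(\cdot,s)$ on $Sp_{12}({\bf A})$, with a single residual theta kernel $\theta^{\phi_1}_{\psi^{-1},12}$. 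It is at this stage that the metaplectic double cover disappears, the cocycles from $\varphi_\pi^{(2)}$, from the Fourier--Jacobi theta and from $\gamma_\psi$ in the inducing character cancelling against the theta of \eqref{id2}.

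The second main step would be to perform a further round of root exchange inside $Sp_{12}$, producing the unipotent radical $U_{2,12}$ together with its character $\psi_{U_{2,12}}$ of Section~\ref{not2}, in such a way that the $SL_2$-variable $g$ enters through the embedding $t(g,h)$ of \eqref{id4}. The resulting inner integral is then exactly the kernel integral on the right-hand side of \eqref{id5}; invoking that identity collapses it to the Eisenstein series $E_{\tau,\sigma}(\cdot,s)$ on $Sp_6({\bf A})$. What remains is the outer $Sp_4$-integration, a single theta series on the Heisenberg group $V_5\cong\mathcal H_5$, and the Eisenstein series $E_{\tau,\sigma}$, which is precisely the shape of \eqref{fj2}; the Schwartz function $\phi'\in\mathcal{S}({\bf A}^2)$ is obtained from $\phi$ and the auxiliary Schwartz data of \eqref{id2} by an explicit partial Fourier transform.

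The main obstacle is the bookkeeping. At each stage, the characters on the various unipotent subgroups, the embeddings of $Sp_4$, $SL_2$ and $SO_{T_0}$, and the Weil-representation data must all line up so that \eqref{id2} and \eqref{id5} apply verbatim; simultaneously, the metaplectic cocycles arising from $\varphi_\pi^{(2)}$, from the Fourier--Jacobi theta, from the section $f_{\Delta(\tau,4)\gamma_\psi,s}$ and from the theta of \eqref{id2} must cancel to leave a classical Eisenstein series on $Sp_6({\bf A})$. This is the same type of Ikeda-style matching that underlies the proofs of Theorem~\ref{prop2} and Theorem~\ref{prop3}, and no new conceptual ingredient is expected beyond a careful choice of the Schwartz data and of the order in which the root exchanges are carried out.
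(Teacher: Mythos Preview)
Your overall strategy is correct and coincides with the paper's: root exchange via Lemma~\ref{lemmix2}, then descent $Sp_{16}^{(2)}\to Sp_{12}$ by identity~\eqref{id2} with $m=4$, then $Sp_{12}\to Sp_6$ by identity~\eqref{id5}, leaving the shape~\eqref{fj2}. Two organizational points differ from the paper and should be sharpened.

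First, the theta kernel required in~\eqref{id2} is a $\theta_{\psi^{-1},12}$ on $Sp^{(2)}_{12}({\bf A})$, not the small $\theta^\phi_{\psi^{-1},2}$ from the Fourier--Jacobi model. The paper supplies it exactly as in Theorems~\ref{prop2} and~\ref{prop3}, by invoking Ikeda's theorem on the inner $U^0_{1^2,16}$-coefficient; this introduces auxiliary $\phi_1,\phi_2\in\mathcal{S}({\bf A}^6)$, and it is $\phi_2$ that feeds into~\eqref{id2}. The original $\theta^\phi_{\psi^{-1},2}$ enters only at the end: the residual $\theta^{\phi_1}_{\psi,12}$ is factorized (for decomposable $\phi_1$) into $\theta^{\varphi_1}_{\psi,2}\cdot\theta^{\varphi_2}_{\psi,4}$; the first factor pairs with $\theta^\phi_{\psi^{-1},2}$ to a constant, and the second becomes the $\theta^{\phi'}_{\psi,4}$ of~\eqref{fj2}.

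Second, there is no separate round of root exchange ``inside $Sp_{12}$'' after~\eqref{id2}. In the paper all three root exchanges and a Weyl conjugation are carried out in $Sp_{16}$ \emph{before} any descent, so that the reshaped integral already displays $U^0_{1^2,16}$ (for~\eqref{id2}), $\hat U_{2,12}=\text{diag}(I_2,U_{2,12},I_2)$ (for~\eqref{id5}), and $V_5$ (for~\eqref{fj2}) simultaneously. Once~\eqref{id2} is applied, the $U_{2,12}$-integration is already sitting there against $E_{\tau,3}$, and~\eqref{id5} applies directly with no further manipulation.
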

\begin{proof}
The proof of this theorem is very similar to the proof of Theorem \ref{prop3}. We start with the inner $dv_3$-integration of \eqref{fj4}, that is we start with
\begin{multline}\label{fj5}
\int\limits_{U_{4,16}(F)\backslash U_{4,16}({\bf A})}
E^{(2)}(f_{\Delta(\tau,4)\gamma_\psi,s})(ut(v_3\hat{g},h))\psi^{-1}_{U_{4,16}}(u)du.
\end{multline}
Here, we identify $\hat{g}$ with $(\hat{g},1)$ or any fixed element projecting to $\hat{g}$. Next we perform certain root exchanges in \eqref{fj5}. We will do this three times. In the notations of Lemma \ref{lemmix2}, let $B_1=U_{4,16}$; $Y_1$ is the subgroup of $B_1$ consisting of the matrices of the form \eqref{new5}, such that all coordinates of $b, Z$ are zero, and all coordinates of $a$ are zero except $a_{4,1}, a_{4,2}$; $C_1$ is the subgroup of $B_1$ generated by its root subgroups which do not lie in $Y_1$; $X_1$ is the subgroup of elements $I_{16}+x_1e'_{1,4}+x_2e'_{2,4}$. Let $D_1=C_1X_1$. With these definitions  we can apply the first part of Lemma \ref{lemmix2} (with $B=B_1$, $C=C_1$, etc.). Next, let $B_2=D_1$; $Y_2$ is the root subgroup of $B_1$ consisting of the elements $I_{16}+ye'_{4,11}$. (In the notation of \eqref{new5}, $a=Z=0$, and all coordinates of $b$ are zero, except $b_{4,3}$). Let $C_2$ be the subgroup of $B_2$ generated by its root subgroups which do not lie in $Y_2$; $X_2$ is the root subgroup consisting of the elements $I_{16}+xe'_{3,4}$. Let $D_2=C_2X_2$. 
With these definitions  we can apply the first part of Lemma \ref{lemmix2} (with $B=B_2$, $C=C_2$, etc.). Finally, let $B_3=D_2$; $Y_3$ is the subgroup of $B_2$ consisting of the matrices of the form \eqref{new5}, with $b=Z=0$, and all coordinates of $a$ are zero, except $a_{2,1}, a_{3,1}$. Let $C_3$ be the subgroup of $B_3$ generated by its root subgroups which do not lie in $Y_3$; $X_3$ is the subgroup of elements $I_{16}+x_1e'_{1,2}+x_2e'_{1,3}$. Let $D_3=C_3X_3$. Again, we can apply the first part of Lemma \ref{lemmix2} (with $B=B_3$, etc.).
It follows that the integral \eqref{fj5} is equal to
\begin{equation}\label{fj5.1}
\int\limits_{Y_1({\bf A})}\int\limits_{Y_2({\bf A})}\int\limits_{Y_3({\bf A})}\int\limits_{D_3(F)\backslash D_3({\bf A})}
E^{(2)}(f_{\Delta(\tau,4)\gamma_\psi,s})(uy_3y_2y_1t(v_3\hat{g},h))\psi^{-1}_{D_3}(u)dudy_3dy_2dy_1.
\end{equation}
Let $Z_{4,1}$ the subgroup of $D_2$ consisting of the matrices \eqref{new5} with $a=b=0$, and all coordinates of $Z$ are zero, except $Z_{4,1}$. Denote by $Y'_1, Y''_1$ the subgroup of $Y_1$ such that, in the notation above, $a_{4,2}=0$, or $a_{4,1}=0$, respectively. Then $Y''_1Y_2Z_{4,1}$ is  group whose center is $Z_{4,1}$. Also $Y_1Y_3$ is a commutative subgroup, and the elements of $Y'_1Y_3$ commute with the elements of $Y_2Z_{4,1}$. Denote by $Y$ the quotient of $Y_1Y_2Y_3Z_{4,1}$ by $Z_{4,1}$. Denote $X=X_1X_2X_3$. This is a group; $X_1X_2$ is abelian and is normalized by $X_3$ (which is also abelian). Note that in \eqref{fj5.1}, $t(v_3\hat{g},h)$ normalizes $Y_1Y_2$ modulo $Z_{4,1}$, and normalizes $Y_3$ modulo $X_2$. Hence the integral \eqref{fj5.1} is equal to
\begin{equation}\label{fj5.2}
\int\limits_{Y({\bf A})}\int\limits_{D_3(F)\backslash D_3({\bf A})}
E^{(2)}(f_{\Delta(\tau,4)\gamma_\psi,s})(ut(v_3\hat{g},h)y)\psi^{-1}_{D_3}(u)dudy.
\end{equation}
Now, we can apply the proof of the third part of Lemma \ref{lemmix2} in two stages. The first is for $(D_2, Z_{4,1}\backslash Y_2Y_1Z_{4,1}, X_1X_2)$. Here, $t(v_3\hat{g},h)$ satisfies the three properties defining $\Omega'$ in Lemma \ref{lemmix2}, except that in the first property $t(v_3\hat{g},h)$ normalizes $Y_2({\bf A})Y_1({\bf A})$ only modulo $Z_{4,1}({\bf A})$, but this does not affect the argument. Similarly, for $(D_3, Y_3,X_3)$, $t(v_3\hat{g},h)$ satisfies the three properties defining $\Omega'$ in Lemma \ref{lemmix2}, except that in the first two properties $t(v_3\hat{g},h)$ normalizes $Y_3({\bf A})$, $X_3({\bf A})$ only modulo the root subgroup of elements $I_{16}+xe'_{1,4}$, which is a subgroup of $X_1({\bf A})$. Again, this does not affect the argument. Denote $X_{1,2}=X_1X_2$, $Y_{1,2}=Z_{4,1}\backslash Y_2Y_1Z_{4,1}$. Thus, write 
$$
f_{\Delta(\tau,4)\gamma_\psi,s}=\sum_{i=1}^r\sum_{j=1}^\ell \int\limits_{X_{1,2}({\bf A})}\int\limits_{X_3({\bf A})}\xi_i(x_{1,2})\eta_j(x_3)\rho(x_{1,2}x_3)f^{(i,j)}_{\Delta(\tau,4)\gamma_\psi,s}dx_3dx_{1,2},
$$
where, $\rho$ denotes a right translation, and for $1\leq i\leq r$, $1\leq j\leq \ell$, $\xi_i\in C_c^\infty(X_{1,2}({\bf A}))$, $\eta_j\in C_c^\infty(X_3({\bf A}))$, and $f^{(i,j)}_{\Delta(\tau,4)\gamma_\psi,s}$ are smooth, holomorphic sections of $Ind^{Sp_{16}^{(2)}({\bf A})}_{Q^{(2)}_8({\bf A})}\Delta(\tau,4)\gamma_\psi |\text{det}\cdot|^s$. Then repeating the proof of the third part of Lemma \ref{lemmix2} twice, we get that \eqref{fj5.2} and hence \eqref{fj5} is equal to
\begin{equation}\label{fj5.3}
\int\limits_{D_3(F)\backslash D_3({\bf A})}
E^{(2)}(f'_{\Delta(\tau,4)\gamma_\psi,s})(ut(v_3\hat{g},h))\psi^{-1}_{D_3}(u)du,
\end{equation}
where
\begin{equation}\label{fj5.4}
f'_{\Delta(\tau,4)\gamma_\psi,s})=\sum_{i=1}^r\sum_{j=1}^\ell \int\limits_{Y_{1,2}({\bf A})}\int\limits_{Y_3({\bf A})}\hat{\xi}_i(y_{1,2})\hat{\eta}_j(y_3)\rho(y_{1,2}y_3)f^{(i,j)}_{\Delta(\tau,4)\gamma_\psi,s}dy_3dy_{1,2}.
\end{equation}
We now bring in the $dv_3$-integration, and so we consider
\begin{equation}\label{fj5.5}
\int\limits_{D_3(F)\backslash D_3({\bf A})}
\int\limits_{V_3(F)\backslash V_3({\bf A})}
\theta_{\psi^{-1},2}^\phi(i_3(v_3)g)E^{(2)}(f'_{\Delta(\tau,4)\gamma_\psi,s})(ut(v_3\hat{g},h))\psi^{-1}_{D_3}(u)dudv_3.
\end{equation}

Consider the following Weyl element $w_0=\begin{pmatrix} w&\\ &w^*\end{pmatrix}$, where $w$ is a permutation matrix in $GL_8$ such that
$w_{i,j}=1$ at the $(1,1), (2,5), (3,2), (4,3)$ $(5,6), (6,4), (7,7), (8,8)$ entries. Conjugate by $w_0$ inside the Eisenstein series in \eqref{fj5.5}, that is write
$$
E^{(2)}(f'_{\Delta(\tau,4)\gamma_\psi,s})(ut(v_3\hat{g},h))=E^{(2)}(f'_{\Delta(\tau,4)\gamma_\psi,s})(w_0ut(v_3,1)w_0^{-1}t^{w_0}(\hat{g},h)w_0),
$$ 
where $t^{w_0}(\hat{g},h)=w_0t(\hat{g},h)w_0^{-1}$. Carry out this conjugation in \eqref{fj5.5}, change variables, and denote $f''_{\Delta(\tau,4)\gamma_\psi,s}=\rho(w_0)f'_{\Delta(\tau,4)\gamma_\psi,s}$. Then \eqref{fj5.5} becomes
\begin{multline}\label{fj6}
\int\limits_{v_5\in V_5(F)\backslash V_5({\bf A})}\int\limits_{v\in U_{2,12}(F)\backslash U_{2,12}({\bf A})}
\int\limits_{F^5\backslash {\bf A}^5}\int\limits_{u\in U^0_{1^2,16}(F)\backslash U^0_{1^2,16}({\bf A})} \theta_{\psi^{-1},2}^\phi((x_1,x_2,0)g)\\ 
E^{(2)}(f''_{\Delta(\tau,4)\gamma_\psi,s})(u\mu(x_1,x_2,a_1,a_2,a_3)\hat{v}i(g,v_5h)) \psi^{-1}_{U^0_{1^2,16},1}(u)\psi^{-1}_{U_{2,12}}(v)\psi^{-1}(a_1)dud(x,a)dvdv_5.
\end{multline}
Here, $(x_1,x_2,0)$ in the theta series is an element of $\mathcal{H}_3({\bf A})$, with zero coordinate in the center. For $v\in U_{2,12}$, $\hat{v}=diag(I_2,v,I_2)$. The element $v_5h$ is thought of as an element of $Sp_6({\bf A})$, with $h$ embedded in $Sp_6({\bf A})$ as $diag(1,h,1)$. Now, for $h'\in Sp_6$, and $g\in SL_2$,
\begin{equation}\label{fj6.1}
i(g,h')=diag(I_2,g,\begin{pmatrix}g_{1,1}&&g_{1,2}\\&I_6\\g_{2,1}&&g_{2,2}\end{pmatrix},g^*,I_2).
\end{equation}
Put in \eqref{fj6.1}, $i(g,h')=diag(I_2,t'(g,h'),I_2)$. Note that $t'(g,h)\in Sp_{12}$. The character $\psi_{U^0_{1^2,16},1}$ is defined as in Sec. \ref{not2} right before  \eqref{id1}, where we take $t=1$. Finally,
$$
\mu(x_1,x_2,a_1,a_2,a_3)=x_1e_{2,5}'+x_2e_{2,12}'+a_1e_{2,11}'
+a_2e_{2,13}'+a_3e_{2,14}'.
$$
The group of matrices $\mu(x_1,x_2,a_1,a_2,a_3)$ is a subgroup of $U_{1^2,16}$, and the projection $j_{13}$ of $U_{1^2,16}$ on ${\mathcal H}_{13}$ is injective on this subgroup. (See Sec. 2.1). We have 
$$
j_{13}(\mu(x_1,x_2,a_1,a_2,a_3))=(0_2,x_1,0_5,a_1,x_2,a_3,a_4,0)\in {\mathcal H}_{13}.
$$
Also, as mentioned above, we have  $u_2t'(v_5g,h)\in Sp_{12}$.

In integral \eqref{fj6}, consider the inner $du$- integration over $U_{1^2,16}^0(F)\backslash U_{1^2,16}^0({\bf A})$. Let us apply the theorem of Ikeda \cite{I}, as we did in the previous sub-sections. The relevant (Adelic) Heisenberg group here is $\mathcal{H}_{13}({\bf A})$. Let $\phi_1, \phi_2\in \mathcal{S}({\bf A}^6)$, and consider the function $\varphi_{\phi_1,\phi_2}$ on $\mathcal{H}_{13}(\bf A)$ as in \cite{I}, p. 621. Let $\check{f}_{\Delta(\tau,4)\gamma_\psi,s}$ be a smooth, holomorphic section of $Ind_{Q^{(2)}_8({\bf A})}^{Sp^{(2)}_{16}({\bf A})}\Delta(\tau,4)\gamma_\psi|\text{det}\cdot|^s$. Construct the section $\check{f}^{\phi_1,\phi_2}_{\Delta(\tau,4)\gamma_\psi,s}$ similar to \eqref{new7.2}. By Ikeda's theorem, the inner $du$-integral of \eqref{fj6}, with $\check{f}^{\phi_1,\phi_2}_{\Delta(\tau,4)\gamma_\psi,s}$ replacing $f''_{\Delta(\tau,4)\gamma_\psi,s}$, is equal to
\begin{multline}\label{fj7}
\theta_{\psi,12}^{\phi_1}((0_2,x_1,0_5,a_1,x_2,a_3,a_4,0)vt'(v_5g,h))\\
\int\limits_{U_{1^2,16}(F)\backslash U_{1^2,16}({\bf A})}
\overline{\theta_{\psi,12}^{\phi_2}(j_{13}(u)vt'(g,v_5h))}E^{(2)}(\check{f}_{\Delta(\tau,4)\gamma_\psi,s})(u{}vt'(g,v_5h))\psi^{-1}_1(u)du.
\end{multline}
Since we don't know that $f''_{\Delta(\tau,4)\gamma_\psi,s}$ is a sum of sections of the form $\check{f}^{\phi_1,\phi_2}_{\Delta(\tau,4)\gamma_\psi,s}$, we can use a similar argument as in the last sub-section and construct, for a given smooth, holomorphic section $\check{f}_{\Delta(\tau,4)\gamma_\psi,s}$, another such section $f_{\Delta(\tau,4)\gamma_\psi,s}$, similar to \eqref{new26.2}, such that the $du$-inner integral in \eqref{fj6} is equal to \eqref{fj7}.

 By Identity \eqref{id2}, the integral in \eqref{fj7} is equal to
$E(\Lambda(\check{f}_{\Delta(\tau,4)\gamma_\psi,s},\phi_2))(vt'(g,v_5h))$, where $\Lambda(\check{f}_{\Delta(\tau,4)\gamma_\psi,s},\phi_2)$ is a smooth, meromorphic section of $Ind_{Q_6({\bf A})}^{Sp_{12}({\bf A})}\Delta(\tau,3)|\text{det}\cdot|^s$. We denote, for short, $f_{\Delta(\tau,3),s}=\Lambda(\check{f}_{\Delta(\tau,4)\gamma_\psi,s},\phi_2)$. 
Plugging this into integral \eqref{fj6} we obtain the integral
\begin{multline}\label{fj8}
\int\limits_{v_5\in V_5(F)\backslash V_5({\bf A})}\int\limits_{v\in U_{2,12}(F)\backslash U_{2,12}({\bf A})}
\int\limits_{F^5\backslash {\bf A}^5}\theta_{\psi^{-1},2}^\phi((x_1,x_2,0)g)\\
\theta_{\psi,12}^{\phi_1}((0_2,x_1,0_5,a_1,x_2,a_3,a_4,0)vt'(g,v_5h))E(f_{\Delta(\tau,3),s})(vt'(g,v_5h))
\psi^{-1}_{U_{2,12}}(v)\psi^{-1}(a_1)d(x,a)dvdv_5.
\end{multline}

Consider the inner integration  over $a$.  We have
\begin{multline}\label{fj9} 
\int\limits_{(F\backslash {\bf A})^3}\theta_{\psi,12}^{\phi_1}((0_2,x_1,0_5,a_1,x_2,a_3,a_4,0)vt'(g,v_5h))\psi^{-1}(a_1)da=\\
\sum_{\xi_3,\xi_5,\xi_6\in F}\omega_{\psi,12}((0_2,x_1,0_6,x_2,0_3)vt'(g,v_5h))
\phi_1(0,0,\xi_3,1,\xi_5,\xi_6).
\end{multline}
It follows from the action of the Weil representation that the sum on the r.h.s. of \eqref{fj9} is left invariant under all $v\in U_{2,12}({\bf A})$. Hence, this sum is equal to 
\begin{equation}\label{fj10}
\sum_{\xi_3,\xi_5,\xi_6\in F}\omega_{\psi,8}((x_1,0_6,x_2,0)j(g,v_5h))\phi'_1(\xi_3,1,\xi_5,\xi_6).
\end{equation}
Here, for $x\in {\bf A}^4$, $\phi_1'(x)=\phi_1(0_2,x)$. Notice that the elements $t'(v_5g,h)$ in equation \eqref{fj9} are replaced by  $j(v_5g,h)\in Sp_8({\bf A})$ in equation \eqref{fj10}. See \eqref{id4} for the definition of $j(v_5g,h)$. Also, in the summation \eqref{fj10}, we have  $(x_1,0_6,x_2,0)\in {\mathcal H}_9(F)$. All this follows from the formulas of the action of the Weil representation. 

Take $\phi'_1$ of the form $\phi'_1(x_1,...,x_4)=\varphi_1(x_1)\varphi_2'(x_2,x_3,x_4)$, where $\varphi_1,\varphi'_2$ are Schwartz functions. Separating the summation in \eqref{fj10} over $\xi_3$ from the sum over $\xi_5,\xi_6$, it follows from the factorization properties of theta series, that the sum \eqref{fj10} is equal to 
$$
\theta_{\psi,2}^{\varphi_1}((x_1,x_2,0)g)\theta_{\psi,4}^{\varphi_2}(i_5(v_5)h),
$$
where $\varphi_2(y_1,y_2)=\varphi_2'(1,y_1,y_2)$. Thus, for the above choices of functions, integral \eqref{fj8} is equal to
\begin{multline}\label{fj11}
\int\limits_{v_5\in V_5(F)\backslash V_5({\bf A})}\int\limits_{v\in U_{2,12}(F)\backslash U_{2,12}({\bf A})}
\int\limits_{(F\backslash {\bf A})^2}\theta_{\psi^{-1},2}^\phi((x_1,x_2,0)g)\theta_{\psi,2}^{\varphi_1}((x_1,x_2,0)g)\\
\theta_{\psi,4}^{\varphi_2}(i_5(v_5)h)
E(f_{\Delta(\tau,3),s})(vt'(g,v_5h))
\psi^{-1}_{U_{2,12}}(v)dxdvdv_5.
\end{multline}
Consider the function $\phi\otimes \varphi_1\in \mathcal{S}({\bf A}^2)$, and view it as a function in the space of the Weil representation $\omega_{\psi,4}$ of $Sp^{(2)}_4({\bf A})\mathcal{H}_5({\bf A})$. Using the factorization properties of theta series, we can find a Weyl element $\gamma_0\in Sp_4(F)$, such that for the function $\Phi_{\phi,\varphi_1}=\omega_{\psi,4}(\gamma_0)(\phi\otimes \varphi_1)$
$$
\int\limits_{(F\backslash {\bf A})^2}\theta_{\psi^{-1},2}^\phi((x_1,x_2,0)g)\theta_{\psi,2}^{\varphi_1}((x_1,x_2,0)g)dx=\int\limits_{(F\backslash {\bf A})^2}\theta_{\psi,4}^{\Phi_{\phi,\varphi_1}}\left ((0_2,x_1,x_2,0)\begin{pmatrix} h&\\ &h^*\end{pmatrix}\right )dx.
$$
Unfolding the theta series, the right hand side of this equality is equal to $\Phi_{\phi,\varphi_1}(0_2)=\omega_{\psi,4}(\gamma_0)(\phi\otimes \varphi_1)(0_2)$. Thus, up to this value, the integral \eqref{fj11} is equal to
\begin{multline}\label{fj12}
\int\limits_{v_5\in V_5(F)\backslash V_5({\bf A})}\int\limits_{v\in U_{2,12}(F)\backslash U_{2,12}({\bf A})} 
\theta_{\psi,4}^{\varphi_2}(i_5(v_5)h)
E(f_{\Delta(\tau,3),s})(vt'(g,v_5h))
\psi^{-1}_{U_{2,12}}(v)dvdv_5.
\end{multline}
To summarize, we proved that for the choice of data described above, the inner unipotent integration in integral \eqref{fj4} is equal (up to multiplication by $\omega_{\psi,4}(\gamma_0)(\phi\otimes \varphi_1)(0_2)$) to integral \eqref{fj12}, and then the integral \eqref{fj4} is equal (up to $\omega_{\psi,4}(\gamma_0)(\phi\otimes \varphi_1)(0_2)$) to
\begin{multline}\label{fj13}
\int\limits_{Sp_4(F)\backslash Sp_4({\bf A})}
\int\limits_{V_5(F)\backslash V_5({\bf A})}
\varphi^{(2)}_\pi(h)\theta_{\psi,4}^{\varphi_2}(i_5(v_5)h)\\
\int\limits_{SL_2(F)\backslash SL_2({\bf A})}
\int\limits_{U_{2,12}(F)\backslash U_{2,12}({\bf A})} 
\varphi_\sigma(g^\iota)
E(f_{\Delta(\tau,3),s})(vt'(g,v_5h))
\psi^{-1}_{U_{2,12}}(v)dvdgdv_5dh.
\end{multline}
Now we apply Identity \eqref{id5}, which tells us, that there is a smooth, meromorphic section of $Ind_{Q_{2,2}({\bf A})}^{Sp_6({\bf A})}\tau |\text{det}\cdot|^s\times \sigma$, $\Lambda(f_{\Delta(\tau,3),s},\varphi_\sigma)$, such that the inner $dvdg$-integral in \eqref{fj13} is equal to $E(\Lambda(f_{\Delta(\tau,3),s},\varphi_\sigma))(v_5h)$. Thus, integral \eqref{fj13} is equal to 
$$
\int\limits_{Sp_4(F)\backslash Sp_4({\bf A})}
\int\limits_{V_5(F)\backslash V_5({\bf A})}
\varphi^{(2)}_\pi(h)\theta_{\psi,4}^{\varphi_2}(i_5(v_5)h)E(\Lambda(f_{\Delta(\tau,3),s},\varphi_\sigma))(v_5h)dv_5dh,
$$

and this is an integral of the form \eqref{fj2}.

\end{proof}


\begin{thebibliography}{AAAAA}

\bibitem[B-F-G]{B-F-G} Bump, Daniel; Furusawa, Masaaki; Ginzburg, David. Non-unique models in the Rankin-Selberg method. J. Reine Angew. Math. 468 (1995), 77--111.



\bibitem[C-F-G-K1]{C-F-G-K1} Yuanqing Cai, Solomon Friedberg, David Ginzburg, Eyal Kaplan.  Doubling Constructions and Tensor Product L-Functions: the linear case.
In  arXiv:1710.00905.

\bibitem[C-F-G-K2]{C-F-G-K2} Yuanqing Cai, Solomon Friedberg, David Ginzburg, Eyal Kaplan.  Doubling Constructions for Covering Groups and Tensor Product L-Functions.
In arXiv:1601.08240.

\bibitem[D-M]{D-M} Dixmier, Jacques; Malliavin, Paul.
Factorisations de fonctions et de vecteurs indefiniment differentiables. Bull. Sci. Math. (2) 102 (1978), no. 4, 307--330.

\bibitem[G]{G}  Ginzburg, David.
Certain conjectures relating unipotent orbits and automorphic representations. Israel J. Math. 151 (2006), 323--355.


\bibitem[G-H]{G-H}  Ginzburg, David; Hundley, Joseph. A doubling integral for G2. Israel J. Math. 207 (2015), no. 2, 835--879.

\bibitem[G-J]{G-J} Godement, Roger; Jacquet, Herve/ Zeta functions of simple algebras. Lecture Notes in Mathematics, Vol. 260. Springer-Verlag, Berlin-New York, 1972. ix+188 pp.

\bibitem[G-J-R-S]{G-J-R-S} Ginzburg, David; Jiang, Dihua; Rallis, Stephen; Soudry, David/ L-functions for symplectic groups using Fourier-Jacobi models. Arithmetic geometry and automorphic forms, 183--207, Adv. Lect. Math. (ALM), 19, Int. Press, Somerville, MA, 2011.


\bibitem[G-R-S1]{G-R-S1} Ginzburg, David; Rallis, Stephen; Soudry, David. L-functions for symplectic groups. Bull. Soc. Math. France 126 (1998), no. 2, 181--244.

\bibitem[G-R-S2]{G-R-S2}  Ginzburg, David; Rallis, Stephen; Soudry, David. The descent map from automorphic representations of GL(n) to classical groups. World Scientific Publishing Co. Pte. Ltd., Hackensack, NJ, 2011. x+339 pp. ISBN: 978-981-4304-98-6; 981-4304-98-0.

\bibitem[G-S]{G-S} Ginzburg, David; Soudry, David. arXiv 1808.01572. 

\bibitem[I]{I}   Ikeda, Tamotsu. On the theory of Jacobi forms and Fourier-Jacobi coefficients of Eisenstein series. J. Math. Kyoto Univ. 34 (1994), no. 3, 615--636.

\bibitem[J]{J}  Jacquet, Herve. On the residual spectrum of $GL(n)$. Lie group representations, II (College Park, Md., 1982/1983), 185--208, Lecture Notes in Math., 1041, Springer, Berlin, 1984

\bibitem[J-L]{J-L} Jacquet, Herve.; Langlands, Robert. Automorphic forms on GL(2). Lecture Notes in Mathematics, Vol. 114. Springer-Verlag, Berlin-New York, 1970. vii+548 pp.

\bibitem[PS-R1]{PS-R1}  Piatetski-Shapiro, Ilya; Rallis, Stephen. $L$ functions for the classical groups. Explicit constructions of automorphic L-functions. Lecture Notes in Mathematics, 1254. Springer-Verlag, Berlin, 1987. vi+152 pp. ISBN: 3-540-17848-1

\bibitem[PS-R2]{PS-R2} Piatetski-Shapiro, Ilya.; Rallis, Stephen. A new way to get Euler products. J. Reine Angew. Math. 392 (1988), 110--124.













\end{thebibliography}
\end{document}